\crefname{hypothesis}{Hypothesis}{Hypotheses}
\title{Two-level Nystr\"om--Schur Preconditioner for Sparse Symmetric Positive Definite Matrices\thanks{Submitted to the editors January 28, 2021.}}
\author{Hussam Al Daas\thanks{STFC Rutherford Appleton Laboratory, Harwell Campus, Didcot, Oxfordshire, OX11 0QX, UK 
  (\email{hussam.al-daas@stfc.ac.uk},
  \email{tyrone.rees@stfc.ac.uk}, 
  \email{jennifer.scott@stfc.ac.uk}).}
\and Tyrone Rees\footnotemark[2]
\and Jennifer Scott\footnotemark[2] \thanks{School of Mathematical, Physical and Computational Sciences,
University of Reading, Reading RG6 6AQ, UK}
}
\newcommand{\R}{\mathbb{R}}
\newcommand{\calP}{\mathcal{P}}
\newcommand{\calPD}{\mathcal{P}_{\text{\tiny{DEF}}}}
\newcommand{\calPAD}{\mathcal{P}_{\text{\tiny{A-DEF}}}}
\newcommand{\calPADi}[1]{\mathcal{P}_{\text{\tiny{#1A-DEF}}}}
\newcommand{\calM}{\mathcal{M}}
\newcommand{\calMADi}[1]{\mathcal{M}_{\text{\tiny{#1A-DEF}}}}
\newcommand{\cblue}[1]{{{#1}}}
\begin{document}

\maketitle

\begin{abstract}
  Randomized methods are becoming increasingly popular in numerical linear algebra.
  However, few attempts have been made to use them in developing preconditioners.
  Our interest lies in solving large-scale sparse symmetric positive definite linear systems of equations
  where the system matrix is preordered to doubly bordered block diagonal form (for example, using a
  nested dissection ordering).
  We investigate the use of randomized methods to construct high quality preconditioners.
  In particular, we propose a new and efficient approach that employs Nystr\"om's method for computing low rank approximations
  to develop robust algebraic two-level preconditioners. Construction of the new 
  preconditioners involves iteratively solving a smaller but denser symmetric positive definite
  Schur complement system with multiple right-hand sides. Numerical experiments 
  on problems coming from a range of application areas demonstrate that this inner system
  can be solved cheaply using block conjugate gradients and that using a large convergence tolerance 
  to limit the cost does
  not adversely affect the quality of the resulting Nystr\"om--Schur two-level preconditioner.
\end{abstract}

\begin{keywords}
  Randomized methods, Nystr\"om's method, Low rank, Schur complement, Deflation, Sparse symmetric
  positive definite systems, Doubly bordered block diagonal form, Block Conjugate Gradients, Preconditioning.
\end{keywords}


\section{Introduction}
Large scale linear systems of equations arise in a wide range of real-life applications.
Since the 1970s, sparse direct methods, such as LU, Cholesky, and LDLT factorizations, have been studied in depth and 
library quality software is available (see, for example, \cite{DufER17} and the references therein).
However, their memory requirements and the difficulties in developing effective
parallel implementations can limit their scope for solving extremely large problems,
unless they are used in combination with an iterative approach.
Iterative methods are attractive because they have low memory requirements and are
simpler to parallelize.
In this work, our interest is in using the conjugate gradient (CG) method
to solve large sparse symmetric positive definite (SPD) systems of the form
\begin{equation}
  \label{eq:linear_system}
  Ax = b,
\end{equation}
where $A \in \R^{n\times n}$ is SPD, $b\in\R^n$ is the given right-hand side, and $x$ is the required solution.  
\cblue{The solution of SPD systems is ubiquitous in scientific
computing, being required in applications as diverse as
least-squares problems, non-linear optimization subproblems, Monte-Carlo simulations, 
finite element analysis, and Kalman filtering.
In the following, we assume no additional structure beyond a sparse SPD system.
}

It is well known that the approximate solution $x_k$ at iteration $k$ of the CG method satisfies
\begin{equation}
  \label{eq:k-th_error_cg}
  \|x_\star - x_k\|_A \le 2 \|x_\star - x_0\|_A \left(\frac{\sqrt{\kappa} - 1}{\sqrt{\kappa} + 1}\right)^k,
\end{equation}
where $x_\star$ is the exact solution, $x_0$ is the initial guess, $\|\cdot\|_A$ is the $A$-norm, 
and $\kappa(A) = \lambda_{\text{max}}/\lambda_{\text{min}}$ is the spectral condition number
($\lambda_{\text{max}}$ and $\lambda_{\text{min}}$ denote the largest and smallest eigenvalues of $A$).
The rate of convergence also depends on the distribution
of the eigenvalues  (as well as on $b$ and $x_0$): eigenvalues clustered
away from the origin lead to rapid convergence.
If $\kappa(A)$ is large and the eigenvalues of $A$ are evenly distributed,
the system needs to be preconditioned to enhance convergence. This can be done by
applying a linear operator $\calP$ to \cref{eq:linear_system}, where $\calP \in \R^{n\times n}$ is chosen 
so that the spectral condition number of $\calP A$ is small
and applying $\calP$ is inexpensive.
In some applications, knowledge of the provenance of $A$ can help in building an efficient preconditioner.
Algebraic preconditioners do not assume such knowledge, and include
incomplete Cholesky factorizations,
block Jacobi, Gauss--Seidel, 
and additive Schwarz; see, for example,  \cite{Saa03}.
These are referred to as {\em one-level} or {\em traditional} preconditioners \cite{DolJN15,TanMNV10}.
In general, algebraic preconditioners bound the largest eigenvalues of $\calP A$ but 
encounter difficulties in controlling the smallest eigenvalues, which can lie close to the origin,
hindering convergence.

Deflation strategies have been proposed to overcome the issues related to small eigenvalues.
As explained in \cite{KahR17}, the basic idea behind deflation is to ``hide'' certain
parts of the spectrum of the matrix from the CG method, such that the CG iteration
``sees'' a system that has a much smaller condition number than the original matrix. The part of the spectrum that
is hidden from CG is determined by the deflation subspace and the improvement in the
convergence rate of the deflated CG method is dependent on the choice of this subspace.
In the ideal case, the deflation subspace is the invariant subspace spanned by the eigenvectors associated 
with the smallest eigenvalues of $A$ and the convergence rate
is then governed by the ``effective'' spectral condition number associated with 
the remaining eigenvalues (that is, the ratio of the largest eigenvalue to the smallest remaining eigenvalue). 
The idea was first introduced in the late 1980s~\cite{Dos88,Nic87}, and has been
discussed and used by a number of researchers
\cite{AldG19,AldGJT21,FraV01,GriNY14,JonVVK09,JonVVS13,LiXS16,NatXDS11,SpiDHNPS14,SpiR13,VuiSM99,VuiSMW01}.
However, in most of these references, the deflation subspaces rely on the underlying partial differential equation and its discretization,
and cannot be applied to more general systems or used as ``black box'' preconditioners.
Algebraic two-level preconditioners 
have been proposed in \cite{AldJS21,GauGLN13,Gut14,NabV08,TanMNV10,TanNVE09}.
Recently, a two-level Schur complement preconditioner based on the power series approximation
was proposed in~\cite{ZheXS21}.

In recent years, the study of randomized methods 
has become an active and promising research area in the field of numerical linear algebra
(see, for example, \cite{HalMT11,Nak20} and the references therein).
The use of randomized methods to build preconditioners has been proposed in
a number of papers, including \cite{GriNY14,HigM19}.
The approach in \cite{GriNY14} starts by reordering the system matrix $A$ to a $2 \times 2$ doubly bordered
block diagonal form, which can be achieved using a nested dissection ordering.
The Schur complement system must then be solved.
Starting from a first-level preconditioner $\calP$, a deflation subspace 
is constructed via a low rank approximation.
Although deflation can be seen as a low rank correction, 
using randomized methods to estimate the low rank term is not straightforward 
because the deflation subspace is more likely to be associated 
with the invariant subspace corresponding to the smallest eigenvalues of the preconditioned matrix,
and not to its dominant subspace. 
\cblue{In~\cref{sec:background}, we review the ingredients involved in building our two-level preconditioner. This
  includes Nystr\"om's method for computing a
  low rank approximation of a matrix \cite{GitM16,HalMT11,Nys30,ChrW01,Woo14}, basic ideas behind deflation
  preconditioners, and the two-level Schur complement preconditioners
  presented in \cite{GriNY14,LiXS16}.
  In~\cref{sec:GEVP}, we illustrate the difficulties in constructing these two-level preconditioners by analysing
  the eigenvalue problems that must be solved. 
  We show that these difficulties are mainly associated with the clustering of eigenvalues near the 
  origin. Motivated by this analysis, in~\cref{sec:preconditioner} we propose reformulating the approximation problem.}
  
The new formulation leads to well-separated eigenvalues that lie away from the origin,
and this allows randomized methods to be used to compute a deflation subspace.
Our approach guarantees a user-defined upper bound on the expected value of the spectral condition number of the preconditioned matrix.
Numerical results for our new  preconditioner and comparisons with other approaches
are given in \cref{sec:numerical_experiments}.
 Concluding remarks are made in \cref{sec:conclusion}.

\cblue{
Our main contributions are:
\begin{itemize}
  \item an analysis of the eigenvalue problems and solvers presented in \cite{GriNY14,LiXS16};
  \item a reformulation of the eigenvalue problem so that it be efficiently solving using randomized methods;
  \item a new two-level preconditioner for symmetric positive definite systems that we refer to as a 
  two-level Nystr\"om--Schur preconditioner;
  \item theoretical bounds on the expected value of the spectral condition number of the preconditioned system.
\end{itemize}
}
%



\medskip
\paragraph{Test environment}
In this study, to demonstrate our theoretical and practical findings, 
we report on numerical experiments using the test matrices  given in~\cref{tab:test_matrices}.
\cblue{This set was chosen to include 2D and 3D problems having a range of densities and with relatively large
spectral condition numbers. In the Appendix, results are given for a much larger set of matrices. 
For each test, the entries of the right-hand side vector $f$ are taken to be random numbers in the interval $[0,1]$.}
All experiments are performed using Matlab 2020b.
\begin{table}
  \label{tab:test_matrices}
  \centering
  \begin{adjustbox}{width=\linewidth}
  \begin{tabular}{lclclclclclclcl}
    \toprule
    Identifier   && $n$              && $nnz(A)$            && $\kappa(A)$   && $n_\Gamma$  && 2D/3D && Application        && Source \\
    \midrule                                                                                                                
    bcsstk38     && \phantom{0}8,032 && \phantom{0,}355,460  && 5.5e+16     && 2,589       && 2D    && Structural problem && SSMC  \\
    ela2d        &&           45,602 && \phantom{0,}543,600  && 1.5e+8      && 4,288       && 2D    && Elasticity problem && FF++  \\
    ela3d        && \phantom{0}9,438 && \phantom{0,}312,372  && 4.5e+5      && 4,658       && 3D    && Elasticity problem && FF++  \\
    msc10848     &&           10,848 &&           1,229,776 && 1.0e+10     && 4,440       && 3D    && Structural problem && SSMC  \\
    nd3k         && \phantom{0}9,000 &&           3,279,690 && 1.6e+7      && 1,785       && 3D    && Not available      && SSMC  \\
    s3rmt3m3     && \phantom{0}5,357 && \phantom{0,}207,123  && 2.4e+10     && 2,058       && 2D    && Structural problem && SSMC  \\
    \bottomrule
  \end{tabular}
  \end{adjustbox}
  \vspace{0.1in}
  \caption{Set of test matrices. $n$ and $nnz(A)$ denote the order of $A$
  and the number of nonzero entries in $A$ disregarding, 
  $\kappa(A)$ is the spectral condition number, $n_\Gamma$ is the order  of the Schur complement \cref{eq:schur_complement}. 
  SSMC refers to SuiteSparse Matrix Collection \cite{DavH11}. FF++ refers to FreeFem++ \cite{Hec12}.}
\end{table}

\medskip
\paragraph{Notation}
Throughout this article, matrices are denoted using uppercase letters; scalars and vectors are lowercase.
The pseudo inverse of a matrix $C$ is denoted by $C^\dagger$ and its transpose is given by $C^\top$.
$\Lambda(M)$ denotes the spectrum of the matrix $M$ and $\kappa(M)$ denotes its condition number.
$\Lambda_k = diag(\lambda_1, \ldots, \lambda_k)$ denotes a $k \times k$ diagonal
matrix with entries on the diagonal equal to $\lambda_1, \ldots, \lambda_k$.
$\widetilde{S}$ (with or without a subscript or superscript) is used as an approximation to a Schur complement matrix.
$\calP$ (with or without a subscript) denotes a (deflation) preconditioner.
$\calM$ (with or without a subscript) denotes a two-level (deflation) preconditioner.
Matrices with an upper symbol such as $\widetilde{Z}$, $\widehat{Z}$, and $\breve{Z}$ 
denote approximations  of the matrix $Z$.
Euler's constant is denoted by $e$.

\section{Background}
\label{sec:background}
\cblue{We start by presenting a brief review of Nystr\"om's  method
for computing a low rank approximation to a matrix
and then recalling key ideas behind two-level preconditioners;  both are required
in later sections.
\subsection{Nystr\"om's method}
\label{sec:Nystrom_background}
Given a matrix $G$, the Nystr\"om approximation of a SPSD matrix $B$ is defined to be
\begin{equation}
  \label{eq:NystromApprox}
  BG (G^\top BG)^\dagger (BG)^\top.
\end{equation}
We observe that there are a large number of variants 
based on different choices of  $G$ (for example, \cite{HalMT11,MarT20,Nak20}).
For $q \ge 0$, the $q$-power iteration Nystr\"om method is obtained by choosing
\begin{equation}
  \label{eq:power}
G = B^q \Omega,
\end{equation}
for a given 
(random) starting matrix $\Omega$. Note that, in practice, for stability it is 
normally necessary to orthonormalize
the columns between applications of $B$.

The variant of Nystr{\"o}m's method we employ is outlined in \cref{alg:nystrom}.
It gives a near-optimal low rank approximation to $B$  and is 
particularly effective when the eigenvalues of $B$ decay rapidly 
after the $k$-th eigenvalue \cite{HalMT11,Nak20}. It requires only one matrix-matrix product
with $B$ (or $q+1$ products if \cref{eq:power} is used).
The rank of the resulting approximation  is $\min(r,k)$, 
where $r$ is the rank of $D_1$, see Step~\ref{line:EVC}.

\begin{algorithm}
  \caption{Nystr{\"o}m's method for computing a low rank approximation to a SPSD matrix.}
  \label{alg:nystrom}
  \begin{algorithmic}[1]
    \Require{A SPSD matrix $B\in\mathbb{R}^{n\times n}$, the required rank $k>0$, 
    an oversampling parameter $p\ge0$ such that $k ,p  \ll n$, and a threshold $\varepsilon$.}
    \vspace{0.05in}
    \Ensure{$\widetilde{B}_k = \widetilde{U}_k \widetilde{\Sigma}_k \widetilde{U}_k^\top \approx B$ where $\widetilde{U}_k$ is orthonormal $\widetilde{\Sigma}_k$ is
    diagonal with non negative entries.}
     \vspace{0.05in}
    \State{Draw a random matrix $G\in \mathbb{R}^{n\times (k+p)}$.}
     \State{Compute  $F = BG$.} 
    \State{Compute the QR factorization $F = QR$.} 
    \State{Set $C = G^\top F$.}
    \State{Compute the EVD $C= V_1 D_1 V_1^\top + V_2 D_2 V_2^\top$,
    where $D_1$ contains all the eigenvalues that are at least $\varepsilon$.}\label{line:EVC}
    \State{Set $T = RV_1 D_1^{-1} (RV_1)^\top$.}
    \State{Compute the EVD $T = W E W^\top$.}\label{line:EVT}
    \vspace{0.1cm}
    \State{Set $\widetilde{U} = QW$, $\widetilde{U}_k = \widetilde{U}(:,1:k)$, $\widetilde{\Sigma} = E(1:k, 1:k)$, 
    and $\widetilde{B}_k = \widetilde{U}_k \widetilde{\Sigma}_k \widetilde{U}_k^\top$.}\label{line:app}
  \end{algorithmic}
\end{algorithm}

Note that, if the eigenvalues are ordered in descending order,
the success of Nystr\"om's method is closely related to the ratio of the $(k+1)$th and the $k$th eigenvalues.
If the ratio is approximately equal to one, $q$ must be large to obtain a good approximation \cite{Sai19}.
}

\subsection{Introduction to two-level preconditioners}
\label{sec:two_level_prec}

Consider the linear system \cref{eq:linear_system}. 
As already noted, deflation techniques are typically used to shift isolated clusters of small eigenvalues 
to obtain a tighter spectrum and a smaller condition number. Such changes
have a positive effect on the convergence of Krylov subspace methods.
Consider the general (left) preconditioned system
\begin{equation}
  \label{eq:precond system}
  \calP A x = \calP b, \qquad \calP \in \R^{n\times n}.
\end{equation}
Given a projection subspace matrix $Z \in \R^{n\times k}$ of full rank and $k \ll n$, 
define the nonsingular matrix $E = Z^\top A Z\in \R^{k\times k}$ and the matrix  $Q = Z E^{-1} Z^\top\in \R^{n\times n}$.
The deflation preconditioner $\calPD\in \R^{n\times n}$ is  defined to be \cite{FraV01}
\begin{equation}
  \label{eq:deflation_prec}
  \calPD = I - AQ.
\end{equation}
It is straightforward to show that
$\calPD$ is a projection matrix and $\calPD A$ has $k$ zero eigenvalues (see \cite{TanNVE09}
for basic properties of $\calPD$).
To solve \cref{eq:linear_system}, we write
\begin{align*}
x = (I - \calPD^\top)x + \calPD^\top x .
\end{align*}
Since $Q$ is symmetric, $\calPD^\top = I - QA$, and so 
\begin{align*}
x =  QAx  + \calPD^\top x = Qb  + \calPD^\top x, 
\end{align*}
and we only need to compute $\calPD^\top x$.
We first find $y$ that satisfies the deflated system
\begin{equation} \label{eq:deflated system}
\calPD Ay =  \calPD b,
\end{equation}
then (due to the identity $A \calPD^\top = \calPD A$) we have that $\calPD^\top y = \calPD^\top x$. We therefore obtain the unique solution
$x = Qb + \calPD^\top y.$
The deflated system \cref{eq:deflated system} is singular and can only be solved using CG if it is consistent \cite{Kaa88}, which is the case here since
the same projection is applied to both sides of a consistent  nonsingular system (\ref{eq:linear_system}).
The deflated system can also be solved using a
preconditioner, giving a two-level preconditioner for the original system.

Tang {\it et al.}~\cite{TanNVE09} illustrate
that rounding errors can result in erratic and slow convergence of CG using $\calPD$.
They thus also consider an adapted deflation preconditioner 
\begin{equation}
  \label{eq:adapted_deflation_prec}
  \calPAD = I - QA + Q,
\end{equation}
that combines $\calPD^\top$ with $Q$.
In exact arithmetic, both  $\calPD$ and $\calPAD$ used with  CG generate the same iterates.
However, numerical experiments \cite{TanNVE09} show that the latter 
is more robust and leads to better numerical behavior of CG\footnote{In \cite{TanNVE09},
$\calPD$ and $\calPAD$ are termed ${\mathcal{P}_{\text{\tiny{DEF1}}}}$ and ${\mathcal{P}_{\text{\tiny{A-DEF2}}}}$, respectively}.

Let $\lambda_n \ge \cdots \ge \lambda_1 > 0$ be the eigenvalues of $A$ with associated normalized eigenvectors $v_n,\ldots, v_1$. For the ideal deflation preconditioner, $\calP_{\footnotesize{\text{ideal}}}$,
the deflation subspace is the invariant subspace spanned by the eigenvectors associated 
with the smallest eigenvalues.
To demonstrate how $\calP_{\footnotesize{\text{ideal}}}$
modifies the spectrum of the deflated matrix, set $Z_k = [v_{1}, \ldots, v_k]$
to be the $n \times k$ matrix whose columns are the eigenvectors corresponding to the smallest eigenvalues.
It follows that $E= Z^\top A Z$ is equal to $\Lambda_k = diag(\lambda_{1}, \ldots, \lambda_k)$ and the preconditioned matrix is
given by
\begin{align*}
  \calP_{\footnotesize{\text{ideal}}}A  &= A - Z_k \Lambda_k Z_k^\top.
\end{align*}
Since $Z_k$ is orthonormal and its columns span an invariant subspace, 
the spectrum of $\calP_{\footnotesize{\text{ideal}}}A$ is $\{\lambda_n, \ldots, \lambda_{k + 1}, 0\}$.
Starting with $x_0$ such that $Z_k^\top r_0 = 0$ ($r_0$ is the initial residual), for $l\ge0$, 
$Z_k^\top (\calP_{\footnotesize{\text{ideal}}} A)^l r_0 = 0$ and $Z_k^\top A^l r_0 = 0$.
Hence the search subspace generated by the preconditioned CG (PCG) method lies in the invariant subspace
spanned by $v_n, \ldots, v_{k+1}$, which is orthogonal to the subspace spanned by the columns of $Z_k$.
Consequently, the effective spectrum of the operator that PCG sees is $\{\lambda_n, \ldots, \lambda_{k+1}\}$ 
and the associated {\em effective spectral condition number} is 
\[ \kappa_{\text{eff}}(\calP_{\footnotesize{\text{ideal}}}A) =\lambda_n/\lambda_{k+1}.\]
Using similar computations, the ideal adapted deflated system is given by:
\begin{equation} \label{eq:ideal adapted}
  \calP_{\footnotesize{\text{A-ideal}}} = A - Z_k \Lambda_k^{-1} Z_k^\top + Z_k Z_k^\top.
\end{equation}
Furthermore, the spectrum of the operator that PCG sees is $\{\lambda_n, \ldots, \lambda_{k+1}, 1, \ldots, 1\}$ 
and the associated effective spectral condition number is 
\[\kappa_{\text{eff}}(\calP_{\footnotesize{\text{A-ideal}}}A) = \max\{1,\lambda_n\}/\min\{1,\lambda_{k+1}\}.\]

In practice, only an approximation of the ideal deflation subspace spanned by the columns of $Z_k$
is available.
Kahl and Rittich~\cite{KahR17} analyze the deflation preconditioner using $\widetilde{Z}_k \approx Z_k$
and present an upper bound on the corresponding effective spectral condition number of the deflated matrix $\kappa\left(\calP A\right)$.
Their bound \cite[Proposition 4.3]{KahR17}, which depends on $\kappa(A)$, 
$\kappa_{\text{eff}}(\calP_{\footnotesize{\text{ideal}}}A)$, 
and the largest principal angle $\theta$ between $\widetilde{Z}_k$ and $Z_k$, is
given by 
\begin{equation}
  \label{eq:Kahl_cond_bound}
  \kappa\left(\calP A\right) \le \left(\sqrt{\kappa(A)} \sin \theta + 
  \sqrt{\kappa_{\text{eff}}(\calP_{\footnotesize{\text{ideal}}}A)}\right)^2,
\end{equation}
where $\sin \theta = \|Z_k Z_k^\top - \widetilde{Z}_k \widetilde{Z}_k^\top\|_2$.

%

\subsection{Schur Complement Preconditioners}
\label{sec:schur_complement_preconditioner}
This section reviews the Schur complement preconditioner with a focus on two-level variants
that were introduced in \cite{GriNY14,LiXS16}.

One-level preconditioners may not provide the required robustness when used with a Krylov 
subspace method because they typically fail to 
capture information about the eigenvectors corresponding to the smallest eigenvalues.
To try and remedy this, \cblue{in their (unpublished) report,} Grigori {\it et al.}~\cite{GriNY14}
and, independently, Li {\it et al.}~\cite{LiXS16} propose  a two-level preconditioner based on 
using a block factorization and approximating the resulting Schur complement. 

Applying graph partitioning techniques (for example, using the  METIS package \cite{KarK97,metis:2020}), 
$A$  can be symmetrically permuted to the $2 \times 2$ doubly bordered block diagonal form
\begin{equation}\label{eq:perm_A}
 P^\top A P = \begin{pmatrix} A_{I} & A_{I\Gamma} \\ A_{\Gamma I} & A_{\Gamma} \end{pmatrix},
\end{equation}
where $A_{I} \in \R^{n_{I} \times n_{I}}$ is a block diagonal matrix,
$A_{\Gamma} \in \R^{n_{\Gamma} \times n_{\Gamma}}$,
$A_{\Gamma I} \in \R^{n_{\Gamma I} \times n_{\Gamma}}$ and $A_{I\Gamma} = A_{\Gamma I}^\top$.
For simplicity of notation, we assume that $A$ is of
the form \cref{eq:perm_A} (and omit the permutation  $P$ from the subsequent discussion).

The block form \cref{eq:perm_A} induces a block LDLT factorization
\begin{equation}\label{eq:A_LDLT}
  A = \begin{pmatrix} I & \\ A_{\Gamma I} A_{I}^{-1} & I \end{pmatrix} \begin{pmatrix} A_{I} & \\ & S_\Gamma \end{pmatrix}
  \begin{pmatrix} I & A_{I}^{-1} A_{I\Gamma}\\ & I \end{pmatrix},
\end{equation}
where 
\begin{equation}\label{eq:schur_complement}
  S_\Gamma = A_{\Gamma} - A_{\Gamma I} A_{I}^{-1} A_{I\Gamma}
\end{equation}
is the Schur complement of $A$ with respect to $A_{\Gamma}$.
Provided the blocks within $A_{I}$ are small, they can be  factorized cheaply  in parallel
using a direct algorithm (see, for example, \cite{Sco01}) and thus we assume that
solving linear systems with  $A_{I}$ is not computationally expensive. 
However, the SPD Schur complement $S_\Gamma$ is typically large and significantly denser than $A_{\Gamma}$
(its size increases with the number of blocks in $A_{I}$)
and, in large-scale practical applications, it may not be possible to explicitly assemble or factorize it.

Preconditioners may be derived by approximating $S_\Gamma^{-1}$. 
An approximate block factorization of $A^{-1}$ is
\begin{align*}
  M^{-1} = \begin{pmatrix} I & -A_I^{-1} A_{I \Gamma I} \\ & I \end{pmatrix} \begin{pmatrix} A_{I}^{-1} & \\ & \widetilde{S}^{-1} \end{pmatrix}
  \begin{pmatrix} I & \\ -A_{\Gamma I}A_{I}^{-1} & I \end{pmatrix},
\end{align*}
where $\widetilde{S}^{-1} \approx S_\Gamma^{-1}$.
If $M^{-1}$ is employed as a preconditioner for $A$ then
the preconditioned system is given by
\begin{equation}\label{eq:M_inv_A}
  M^{-1}A = \begin{pmatrix} I & A_{I}^{-1} A_{I\Gamma} (I-\widetilde{S}^{-1}S_\Gamma) \\ & \widetilde{S}^{-1}S_\Gamma \end{pmatrix},
\end{equation}
with $\Lambda(M^{-1}A) = \{\lambda \in \Lambda(\widetilde{S}^{-1}S_\Gamma)\} \cup \{1\}$.
Thus, to bound the condition number $\kappa(M^{-1}A)$, we need to construct
$\widetilde{S}^{-1}$ so that $\kappa(\widetilde{S}^{-1}S_\Gamma)$ is bounded.
Moreover, \cref{eq:M_inv_A} shows that applying the preconditioner
requires the efficient solution of linear systems with $\widetilde{S}^{-1} S_\Gamma$ and $A_I$, the latter being relatively inexpensive.
We therefore focus on constructing preconditioners $\widetilde S^{-1}$ for linear systems
of the form
\begin{equation}
  \label{eq:Sx=b}
  S_\Gamma w = f.
\end{equation}

Consider the first-level preconditioner obtained by setting
\begin{equation}
 \label{eq:S1}
 \widetilde{S}_1^{-1}:= A_\Gamma^{-1}. 
\end{equation}
Assume for now that we can factorize $A_\Gamma$, although in practice it may
be very large and a recursive construction of the preconditioner  may then be needed (see \cite{XiLS16}).
Let the eigenvalues of the generalized eigenvalue problem
\begin{equation}
  \label{eq:GEVP}
  S_\Gamma z = \lambda \widetilde{S}_1 z
\end{equation}
be  $\lambda_{n_\Gamma} \geq \cdots \geq \lambda_{1} > 0$.
From \cref{eq:schur_complement}, $\lambda_{n_\Gamma} \leq 1$ and so
$$\kappa(\widetilde{S}_1^{-1}S_\Gamma) = \frac{\lambda_{n_\Gamma}}{\lambda_{1}} \le \frac{1}{\lambda_{1}}.$$
As this is unbounded as $\lambda_{1}$ approaches zero, we seek
to add a low rank term to ``correct'' the approximation  and shift the smallest $k$ eigenvalues 
of $\widetilde{S}_1^{-1}S_\Gamma$.
Let $\Lambda_k = diag\{\lambda_{1}, \ldots ,\lambda_{k}\}$ and let
$Z_k\in \R^{n_\Gamma \times k}$ be the matrix whose columns are the corresponding eigenvectors. 
Without loss of generality, we assume $Z_k$ is $A_\Gamma$-orthonormal. 
Let the Cholesky factorization of $A_\Gamma$ be 
\begin{equation}\label{eq:cholesky}
A_\Gamma =R_\Gamma^\top R_\Gamma
\end{equation}
and  define
\begin{equation}
  \label{eq:S2}
  \widetilde{S}_2^{-1} := A_\Gamma^{-1} + Z_k (\Lambda_k^{-1} - I)  Z_k^\top.
\end{equation}
$\widetilde{S}_2^{-1}$ is an additive combination of the first-level preconditioner 
$\widetilde{S}_1^{-1}$ 
and an adapted deflation preconditioner associated with the subspace spanned by the columns of $U_k = R_\Gamma Z_k$, 
which is an invariant subspace of \cblue{$R_\Gamma^{-1} S_\Gamma R_\Gamma^{-\top}$.}
Substituting $U_k$ into \cref{eq:S2} and using \cref{eq:cholesky},  
\begin{equation}
  \label{eq:tildeS_def}
  \widetilde{S}_2^{-1} = R_\Gamma^{-1} (I + U_k (\Lambda_k^{-1} - I)  U_k^\top) R_\Gamma^{-\top}.
\end{equation}
Setting $Q = U_k \Lambda_k^{-1} U_k^\top$ in \cref{eq:adapted_deflation_prec} gives
\cblue{
\begin{align*}
  \calPAD &= R_\Gamma \widetilde{S}_2^{-1} R_\Gamma^\top.
\end{align*}
  }
Now $\widetilde{S}_2^{-1} S_\Gamma= R_\Gamma^{-1}\calPAD R_\Gamma^{-\top} S_\Gamma $ 
and $\calPAD R_\Gamma^{-\top} S_\Gamma R_\Gamma^{-1}$ are spectrally equivalent and
$\Lambda(\widetilde{S}_2^{-1}S_\Gamma)= \{ \lambda_{n_\Gamma}, \lambda_{n_\Gamma - 1}, ..., \lambda_{k + 1\}} \cup \{1\}$.
It follows that 
$$\kappa(\widetilde{S}_2^{-1}S_\Gamma) = \frac{\lambda_{n_\Gamma}}{\lambda_{k + 1}} \le \frac{1}{\lambda_{k + 1}}.$$


Grigori {\it et al.}~\cite{GriNY14} note that \cref{eq:GEVP} is equivalent to the generalized eigenvalue problem
\begin{equation}
  \label{eq:GEVP_2}
 (A_{\Gamma} - S_{\Gamma})z =  A_{\Gamma I} A_I^{-1} A_{I\Gamma} z = \sigma A_\Gamma z, \qquad \sigma = 1-\lambda.
\end{equation}
Setting $u = R_\Gamma z$ and defining
\begin{equation} \label{eq:H}
H = R_\Gamma^{-\top} A_{\Gamma I} A_I^{-1} A_{I\Gamma} R_\Gamma^{-1}, 
\end{equation}
\cref{eq:GEVP_2} becomes
\begin{equation}
  \label{eq:GEVP_3}
  H u = \sigma u.
\end{equation}
Thus, the smallest eigenvalues $\lambda$ of \cref{eq:GEVP} are transformed to the largest eigenvalues $\sigma$
of problems  \cref{eq:GEVP_2,eq:GEVP_3}. 
Grigori {\it et al.} employ a randomized algorithm to compute  a low rank eigenvalue decomposition (EVD) of $H$
that approximates its largest eigenvalues and vectors, which
are multiplied by $R_\Gamma^{-1}$ 
to obtain approximate eigenvectors of $A_\Gamma^{-1} S_\Gamma$.

In~\cite{LiXS16}, Li {\it et al.} write the inverse of the Schur complement $S_\Gamma$ as:
\begin{align}
  \label{eq:S-1_2}
  \begin{split}
    S_\Gamma^{-1} &= \left( A_\Gamma - A_{\Gamma I} A_I^{-1} A_{I \Gamma}\right)^{-1} \\
                  &= \left( R_\Gamma^\top R_\Gamma - A_{\Gamma I} A_I^{-1} A_{I \Gamma}\right)^{-1} \\
                  &= R_\Gamma^{-1} \left(I - H\right)^{-1} R_\Gamma^{-\top},
  \end{split}
\end{align}
where the symmetric positive semidefinite (SPSD) matrix $H$ is given by \cref{eq:H}.
Since $I-H = R_\Gamma^{-\top} S_\Gamma R_\Gamma^{-1}$ is SPD, 
the eigenvalues $\sigma_1 \ge \ldots \ge \sigma_{n_\Gamma}$ of $H$ belong to $[0, 1]$.
Let the EVD of $H$ be 
$$H = U \Sigma U^\top,$$ 
where $U$ is orthonormal and 
$\Sigma = diag\{\sigma_1, \ldots, \sigma_{n_\Gamma}\}$. It follows that 
\begin{align}
  \label{eq:S_1_3}
  \begin{split}
    S_\Gamma^{-1} &= R_\Gamma^{-1} \left(I - U\Sigma U^\top\right)^{-1} R_\Gamma^{-\top} \\
                  &= R_\Gamma^{-1} U \left( I - \Sigma \right)^{-1} U^\top R_\Gamma^{-\top} \\
                  &= R_\Gamma^{-1} \left(I + U \left( \left( I - \Sigma \right)^{-1} - I\right) U^\top \right) R_\Gamma^{-\top} \\
                  &= A_\Gamma^{-1} + R_\Gamma^{-1} U \left( \left( I - \Sigma \right)^{-1} - I\right) U^\top R_\Gamma^{-\top}.
  \end{split}
\end{align}
If $H$ has an approximate EVD of the form 
\begin{align*}
 H \approx U\widetilde \Sigma U^\top, \qquad \widetilde\Sigma = diag\{\widetilde{\sigma}_1, \ldots, \widetilde{\sigma}_{n_\Gamma}\},
\end{align*}
then an approximation of $S_\Gamma^{-1}$ is
\begin{align}
  \label{eq:Stilde-1}
  \begin{split}
    \widetilde{S}^{-1}= A_\Gamma^{-1} + R_\Gamma^{-1} U \left( \left( I - \widetilde{\Sigma} \right)^{-1} - I\right) U^\top R_\Gamma^{-\top}.
  \end{split}
\end{align}
The simplest selection of $\widetilde \Sigma$ is the one that ensures the $k$ largest eigenvalues of $(I-\widetilde \Sigma)^{-1}$
match the largest eigenvalues of $(I-\Sigma)^{-1}$.
 Li {\it et al.} set $\widetilde{\Sigma} = diag(\sigma_1, \ldots, \sigma_k, \theta, \ldots, \theta)$, 
 where $\theta \in [0, 1]$. 
 The resulting preconditioner can be written  as
\begin{equation}
  \label{eq:precLiXS16}
  \widetilde{S}_{\theta}^{-1} = \frac{1}{1-\theta} A_\Gamma^{-1} + Z_k \left( \left(I - \Sigma_k \right)^{-1} - \frac{1}{1-\theta} I\right) Z_k^\top,
\end{equation}
where 
\cblue{
  $\Sigma_k = diag(\sigma_1, \ldots, \sigma_k)$ and
}
the columns of $Z_k = R_\Gamma^{-1}U_k$  are the eigenvectors  
 corresponding to the $k$ largest eigenvalues of $H$. In \cite{LiXS16}, it is shown that  
$\kappa(\widetilde{S}_{\theta}^{-1}S) = (1-\sigma_{n_{\Gamma}})/(1-\theta)$, 
which takes its minimum value for $\theta = \sigma_{k+1}$.

\cblue{
In the next section, we analyse the eigenvalue problems that need 
to be solved to construct the preconditioners \cref{eq:S2,eq:precLiXS16}.
In particular, we show that the approaches presented in \cite{GriNY14,LiXS16} 
for tackling these problems are inefficient because of the eigenvalue distribution.
}
\section{Analysis of $Hu = \sigma u$}
\label{sec:GEVP}
\subsection{Use of the Lanczos method}
\label{sec:nonefficientLanczos}
Consider the  eigenproblem:
\begin{align*}
  \begin{split}
    & \text{Given } \varepsilon>0, \text{find all the eigenpairs } (\lambda, z) \in \R \times \R^{n_\Gamma} \text{ such that}\\
    & S_\Gamma z = \lambda A_\Gamma z, \qquad \lambda < \varepsilon.
  \end{split}
\end{align*}
This can be rewritten as:
\begin{align}
  \label{eq2:GEVP}
  \begin{split}
    & \text{Given } \varepsilon>0, \text{find all the eigenpairs } (\lambda, z) \in \R \times \R^{n_\Gamma} \text{ such that}\\
    & (I - H) u = \lambda u, \qquad z = R_\Gamma^{-1} u, \qquad \lambda < \varepsilon,
  \end{split}
\end{align}
where $R_\Gamma$ and $H$ are given by \cref{eq:cholesky,eq:H}.
Consider also the eigenproblem:
\begin{align}
  \label{eq2:GEVP_3}
  \begin{split}
    & \text{Given } \varepsilon>0, \text{find all the eigenpairs } (\sigma, u) \in \R \times \R^{n_\Gamma} \text{ such that}\\
    & H u = \sigma u, \qquad \sigma > 1 - \varepsilon.
  \end{split}
\end{align}
As already observed, each eigenpair $(\lambda, z)$ of \cref{eq2:GEVP} corresponds 
to the eigenpair  $(1-\lambda, R_\Gamma z)$ of \cref{eq2:GEVP_3}.
Consider using the Lanczos method to solve these eigenproblems.
The Krylov subspace at iteration $j$ generated for \cref{eq2:GEVP} is 
$$K_j((I - H), v_1) = \text{span}(v_1, (I-H) v_1, \ldots, (I-H)^{j-1}v_1),$$
while the subspace generated for \cref{eq2:GEVP_3} is
$$K_j(H, v_1) = \text{span}(v_1, H v_1, \ldots, H^{j-1}v_1).$$
It is clear that, provided the same starting vector $v_1$ is used, $K_j((I - H), v_1)$ and $K_j(H, v_1)$ are identical. 
Suppose that $[\mathcal{V}_{j}, v_{j+1}]$ is the output of the Lanczos basis of the 
Krylov subspace, then the subspace relations that hold at iteration $j$ are
$$(I-H) \mathcal{V}_j = \mathcal{V}_j T_j + v_{j+1} h_j^\top,$$
$$H\mathcal{V}_j = \mathcal{V}_j (I - T_j) - v_{j+1} h_j^\top,$$
where $T_j \in \R^{j \times j}$ is a symmetric tridiagonal matrix and $h_j \in \R^{j}$.
The eigenpair $(\lambda, z)$ (respectively, $(\sigma, u)$) corresponding to the smallest (respectively, largest) 
eigenvalue in \cref{eq2:GEVP} (respectively, \cref{eq2:GEVP_3}) is  
approximated by the eigenpair $(\widetilde{\lambda},R_\Gamma^{-1}\mathcal{V}_j \widetilde{u})$ 
(respectively, $(\widetilde{\sigma},\mathcal{V}_j \widetilde{u})$) corresponding to 
the smallest (respectively, largest) eigenvalue of $T_j$ (respectively, $I - T_j$).
To overcome memory constraints,  the Lanczos procedure is typically restarted after a 
chosen number of iterations, at each restart discarding 
the non convergent part of the Krylov subspace \cite{Ste02}.
Hence, starting with the same $v_1$ and performing the 
same number of iterations per cycle, in exact arithmetic
the accuracy obtained when solving \cref{eq2:GEVP,eq2:GEVP_3} is identical.

\cblue{
  Having shown that the convergence of Lanczos' method for solving~\cref{eq2:GEVP,eq2:GEVP_3} is
  the same, we focus on \cref{eq2:GEVP_3}.
}
\begin{figure}[htbp]
\centering
  \label{fig:clustered_eigenvalues}
    \includegraphics[height=5cm]{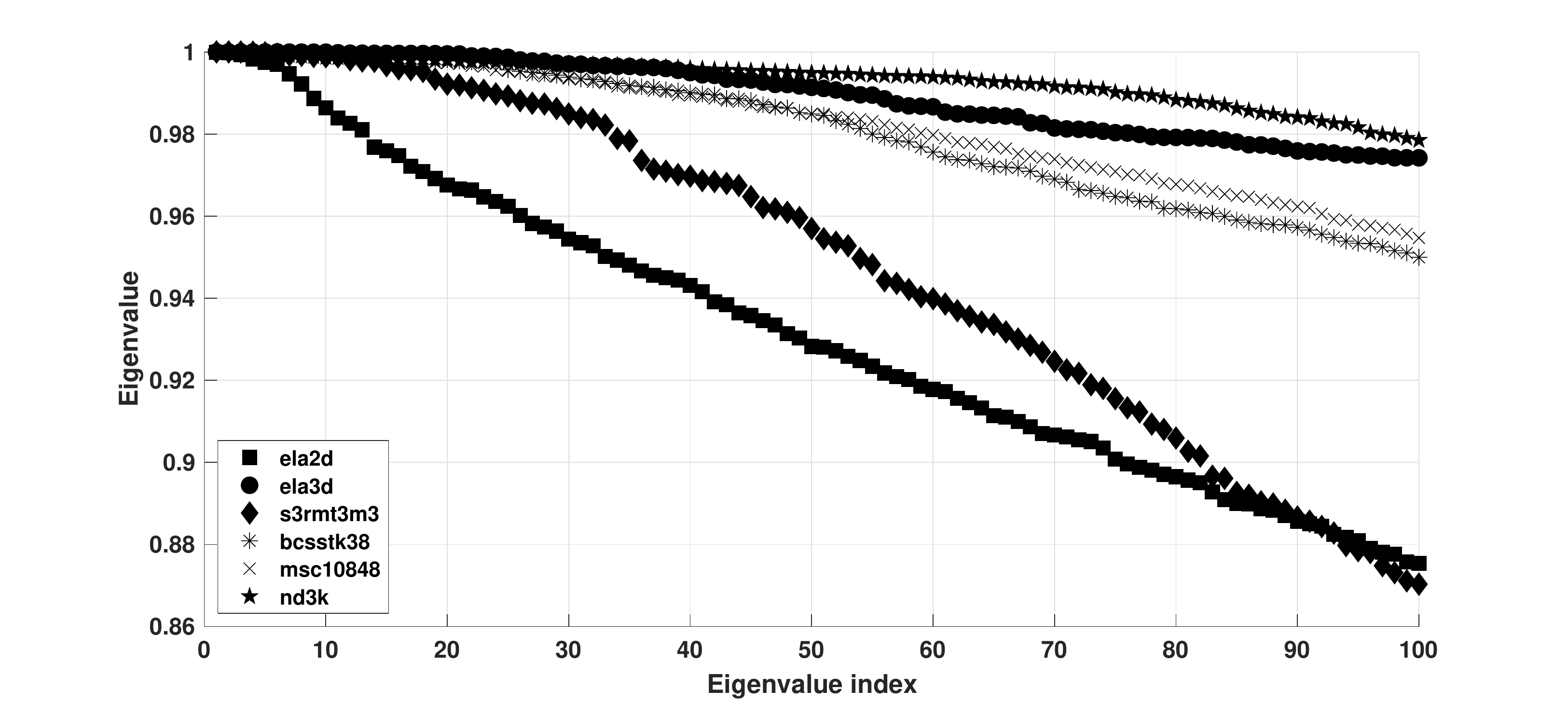}
  \caption{Largest 100 eigenvalues of $H = R_\Gamma^{-\top}A_{\Gamma I} A_{I}^{-1} A_{I\Gamma} R_\Gamma^{-1}$ 
  associated with our test matrices computed to an accuracy of $10^{-8}$ using the Krylov-Schur
  method \cite{Ste02}.}
\end{figure}
In~\cref{fig:clustered_eigenvalues},  for each of our test matrices in \cref{tab:test_matrices} we plot the 100 largest 
eigenvalues of the matrix  $H$ given by \cref{eq:H}.
We see that the largest eigenvalues (which are the ones
that we require) are clustered near one \cblue{and they do not decay rapidly}. As there are a significant 
number of eigenvalues in the cluster, computing \cblue{the largest $k$ (for $k = O(10)$)}  and
the corresponding eigenvectors with sufficient accuracy using the Lanczos method is challenging.
\cblue{Similar distributions were observed for the larger test set
that we report on in the Appendix, particularly for problems for which
the one-level preconditioner $\widetilde{S}_1$ was found to perform poorly, which is generally the case when 
$\kappa(A)$ is large.}
\Cref{tab:clustered_eigenvalues} reports the Lanczos iteration counts ($it_{\text{Lan}}$) for computing 
the $k=20$ and $40$ largest eigenpairs (that is, the number of linear systems that are solved
in the Lanczos method). 
In addition, we present the PCG iteration count ($it_{\text{PCG}}$) for  solving the linear system \cref{eq:Sx=b}
using the first-level preconditioner $\widetilde{S}_{1} = A_{\Gamma}^{-1}$ and  the two-level
preconditioner $\widetilde{S}_2$ given by \cref{eq:S2}. 
\begin{table}[htbp]
  \label{tab:clustered_eigenvalues}
  \centering
  {
\footnotesize
  \begin{tabular}{lrcrrrcrrr}
    \toprule
                 &                     &&  \multicolumn{7}{c}{$\widetilde{S}_{2}$}      \\
                 \cmidrule(lr){4-10}
                 & $\widetilde{S}_{1}$ &&  \multicolumn{3}{c}{$k = 20$}                       && \multicolumn{3}{c}{$k = 40$}                    \\
    \cmidrule(lr){4-7}\cmidrule(lr){8-10}
    Identifier      & $it_{\text{PCG}}$   && $it_{\text{Lan}}$ & $it_{\text{PCG}}$ & total       && $it_{\text{Lan}}$ & $it_{\text{PCG}}$ & total     \\
                 \midrule

    bcsstk38     & 584                 && 797               & 122               & 919         && 730               & 67                & 797       \\
    el2d         & 914                 && 1210              & 231               & 1441        && 982               & 120               & 1102      \\
    el3d         & 174                 && 311               & 37                & 348         && 389               & 27                & 416       \\
    msc10848     & 612                 && 813               & 116               & 929         && 760               & 63                & 823       \\
    nd3k         & 603                 && 1796              & 143               & 1939        && 1349              & 105               & 1454       \\
    s3rmt3m3     & 441                 && 529               & 70                & 599         && 480               & 37                & 517       \\
    \bottomrule
  \end{tabular}
  }
    \vspace{0.1in}
  \caption{The Lanczos iteration count ($it_{\text{Lan}}$) and the iteration count 
  for PCG ($it_{\text{PCG}}$). The convergence tolerance for the Lanczos method
  and PCG is $10^{-6}$. The size of the Krylov subspace per cycle is $2k$.}
\end{table}
We see that, in terms of the total iteration count, the first-level preconditioner is the 
more efficient option.
It is of interest to consider whether relaxing the convergence tolerance $\varepsilon_{\text{Lan}}$ in the Lanczos method can reduce
the total iteration count for $\widetilde{S}_2$.
\Cref{tab:clustered_eigenvalues_1problem} illustrates the effect of varying $\varepsilon_{\text{Lan}}$ for problem el3d (results for the 
other test problems are consistent).
Although $it_{\text{Lan}}$ decreases as $\varepsilon_{\text{Lan}}$ increases,  $it_{\text{PCG}}$
increases and the total 
count still exceeds the 175 PCG iterations required by the first-level preconditioner $\widetilde{S}_1$.
\begin{table}[htbp]
  \label{tab:clustered_eigenvalues_1problem}
  \centering
    {
\footnotesize
  \begin{tabular}{lrrrcrrr}
    \toprule
                               &  \multicolumn{3}{c}{$k = 20$}                  && \multicolumn{3}{c}{$k = 40$} \\
    \cmidrule(rl){2-4}\cmidrule(lr){6-8}
    
    $\varepsilon_{\text{Lan}}$ & $it_{\text{Lan}}$ & $it_{\text{PCG}}$ & total  && $it_{\text{Lan}}$ & $it_{\text{PCG}}$ & total    \\
      \midrule

    $0.1$                      & 50                & 131               & 181    && 80                & 101               & 181      \\
    $0.08$                     & 50                & 131               & 181    && 100               & 85                & 185       \\
    $0.06$                     & 60                & 121               & 181    && 100               & 85                & 185       \\
    $0.04$                     & 82                & 100               & 182    && 120               & 71                & 191       \\
    $0.02$                     & 127               & 64                & 201    && 207               & 37                & 244       \\
    $0.01$                     & 169               & 41                & 210    && 259               & 32                & 291       \\
    $0.005$                    & 213               & 38                & 251    && 316               & 29                & 345       \\
    $0.001$                    & 247               & 37                & 284    && 372               & 28                & 400       \\
        \bottomrule
  \end{tabular}
  }
  \vspace{0.1in}
  \caption{Problem el3d and two-level preconditioner $\widetilde{S}_2$: sensitivity of the number of 
  the Lanczos iteration count ($it_{\text{Lan}}$) and the iteration count for PCG ($it_{\text{PCG}}$) 
  to the convergence tolerance $\varepsilon_{\text{Lan}}$. 
  The PCG convergence tolerance is $10^{-6}$. The size of the Krylov subspace per cycle is $2k$.}
\end{table}

As already observed, in \cite{XiLS16} a recursive (multilevel) scheme is proposed to help
mitigate the computational costs of building and applying the preconditioner.
Nevertheless, the Lanczos method is still used, albeit with reduced costs for applying the operator matrices.

\subsection{Use of Nystr{\"o}m's method}
\label{sec:randomized_methods}
As suggested  in \cite{GriNY14}, an alternative approach to approximating the dominant subspace of $H$ is to 
use a randomized method, specifically a randomized eigenvalue decomposition.
Because $H$ is SPSD, Nystr\"om's method can be use.
Results are presented in \Cref{tab:nystromH} 
for problem el3d (results for our other test examples are consistent with these).
Here $p$ is the oversampling parameter
\cblue{and $q$ is the power iteration parameter}.
These show that, as with the Lanczos method,  Nystr{\"o}m's method 
struggles to approximate the dominant eigenpairs of $H$.
Using $k=20$ (respectively, 40) exact eigenpairs, PCG using $\widetilde{S}_2$ 
requires 37 (respectively, 28) iterations. To obtain the same
iteration counts using vectors computed using  Nystr\"om's method requires 
the oversampling parameter to be greater than $2000$, which is clearly prohibitive.
\cblue{Using the power iteration improves the quality of the approximate subspace. However, the large value of $q$ needed
to decrease the PCG iteration count means a large number of linear systems must be solved with $A_\Gamma$,
in addition to  the work involved
in the orthogonalization that is needed between the power iterations to maintain stability.
Indeed, it is sufficient to look at \Cref{fig:clustered_eigenvalues} to 
predict this behaviour for any randomized method applied to $H$.
The lack of success of existing strategies motivates us, in the next section,  to
reformulate the eigenvalue problem to one with a spectrum 
that is easy to approximate.}

\begin{table}[htbp]
  \label{tab:nystromH}
  \centering
  {
  \footnotesize
  \begin{tabular}{rcrcr}
    \toprule
    $p$                       &&  $k = 20$             &&     $k = 40$ \\
      \midrule
    $100$                     && 171                   && 169               \\
    $200$                     && 170                   && 165                \\
    $400$                     && 165                   && 161                \\
    $800$                     && 155                   && 146                \\
    $1600$                    && 125                   && 111                \\
    $3200$                    && 55                    && 45                 \\
        \bottomrule
  \end{tabular} \quad \quad 
  \cblue{
  \begin{tabular}{rcrcr}
    \toprule
    $q$                       &&  $k = 20$             &&     $k = 40$   \\
      \midrule
    $0$                       && 172                   &&  171           \\
    $20$                      && 121                   &&  87            \\
    $40$                      && 86                    &&  48            \\
    $60$                      && 68                    &&  34            \\
    $80$                      && 55                    &&  30            \\
    $100$                     && 46                    &&  29            \\
        \bottomrule
  \end{tabular}
  }
  }
  \vspace{0.1in}
  \caption{PCG iteration counts for problem el3d using the two-level preconditioner $\widetilde{S}_2$ constructed 
  using a rank $k$ approximation of $H = R_\Gamma^{-\top}A_{\Gamma I} A_{I}^{-1} A_{I\Gamma} R_\Gamma^{-1}$. 
  The PCG convergence tolerance is $10^{-6}$.
  \cblue{
  Nystr\"om's method applied to $H$ with  the oversampling parameter $p \ge 100$ and the power iteration parameter $q = 0$ (left) and
  with $p = 0$ and  $q \ge 0$ (right).}
  }
\end{table}

\section{Nystr\"om--Schur two-level preconditioner}
\label{sec:preconditioner}

In this section, we propose reformulating the eigenvalue problem to obtain a new one 
such that the desired eigenvectors correspond to the largest eigenvalues and
\cblue{these eigenvalues} are well separated from the remaining eigenvalues: this is what is needed for
randomized methods to be successful.

\subsection{Two-level preconditioner for $S_\Gamma$}
\label{sec:two_level for S}

Applying the Sherman Morrison Woodbury identity \cite[2.1.3]{GolV96},
the inverse of the Schur complement $S_\Gamma$ \cref{eq:schur_complement} can be written as:
\begin{align}
  \label{eq:S-1}
  \begin{split}
    S_\Gamma^{-1} &= A_\Gamma^{-1} + A_\Gamma^{-1} A_{\Gamma I} (A_I - A_{I\Gamma} A_\Gamma^{-1} A_{\Gamma I})^{-1} A_{I \Gamma} A_\Gamma^{-1}\\
                  &= A_\Gamma^{-1} + A_\Gamma^{-1} A_{\Gamma I} S_I^{-1} A_{I \Gamma} A_\Gamma^{-1},
  \end{split}
\end{align}
where 
\begin{equation}
  \label{eq:SI}
  S_I = A_I - A_{I\Gamma} A_\Gamma^{-1} A_{\Gamma I}
\end{equation}
is the Schur complement of $A$ with respect to $A_I$.
Using the Cholesky factorization \cref{eq:cholesky}, we have
\begin{equation}
  \label{eq:Rt_S-1_R}
  R_\Gamma S_\Gamma^{-1} R_\Gamma^{\top} = I + R_\Gamma^{-\top} A_{\Gamma I} S_I^{-1} A_{I \Gamma} R_\Gamma^{-1}.
\end{equation}
\cblue{Note that if $(\lambda, u)$ is an eigenpair of $R_\Gamma^{-\top} S_\Gamma R_\Gamma^{-1}$, 
  then $(\frac{1}{\lambda} - 1, u)$ is  an eigenpair of $R_\Gamma^{-\top} A_{\Gamma I} S_I^{-1} A_{I \Gamma} R_\Gamma^{-1}$.
  Therefore, the cluster of eigenvalues of $R_\Gamma^{-\top} S_\Gamma R_\Gamma^{-1}$ 
  near the origin (which correspond to the cluster of eigenvalues of $H$ near 1) 
  correspond to very large and highly separated eigenvalues of 
  $R_\Gamma^{-\top} A_{\Gamma I} S_I^{-1} A_{I \Gamma} R_\Gamma^{-1}$. 
  Hence, using randomized methods to approximate the dominant subspace of 
  $R_\Gamma^{-\top} A_{\Gamma I} S_I^{-1} A_{I \Gamma} R_\Gamma^{-1}$ 
  can be an efficient way of computing a deflation subspace for $R_\Gamma^{-\top} S_\Gamma R_\Gamma^{-1}$.
}
Now assume that we have a low rank approximation
\begin{equation}
  \label{eq:lr_approximation_S-1_A-I}
  R_\Gamma^{-\top} A_{\Gamma I} S_I^{-1} A_{I \Gamma} R_\Gamma^{-1} \approx \breve{U}_k \breve{\Sigma}_k \breve{U}_k^\top ,
\end{equation}
where $\breve{U}_k \in \R^{n_\Gamma \times k}$ is  orthonormal  and $\breve{\Sigma}_k \in \R^{k \times k}$ is diagonal.
Combining \cref{eq:Rt_S-1_R} and \cref{eq:lr_approximation_S-1_A-I}, we can
define a preconditioner for $R_\Gamma^{-\top} S_\Gamma R_\Gamma^{-1}$ to be
\begin{equation}
  \label{eq:proposed_deflation_prec}
  \calP_1 = I + \breve{U}_k \breve{\Sigma}_k \breve{U}_k^\top.
\end{equation}
The preconditioned matrix $\calP_1 R_\Gamma^{-\top} S_\Gamma R_\Gamma^{-1}$ is 
spectrally equivalent to $R_\Gamma^{-1}\calP_1 R_\Gamma^{-\top} S_\Gamma$. Therefore,
the preconditioned system can be written as 
\begin{equation}
  \label{eq:prec1S}
  \calM_1 S_\Gamma = R_\Gamma^{-1}\calP_1R_\Gamma^{-\top} S_\Gamma = \left(A_\Gamma^{-1} + \breve{Z}_k \breve{\Sigma}_k \breve{Z}_k^\top\right)S_\Gamma,
\end{equation}
where $\breve{Z}_k = R_\Gamma^{-1} \breve{U}_k$.
If  \cref{eq:lr_approximation_S-1_A-I} is 
obtained using a  truncated EVD denoted by $U_k \Sigma_k U_k^\top$, then $\breve{U}_k = U_k$ and the subspace spanned by the 
columns of $U_k$ is an invariant subspace of $R_\Gamma S_\Gamma^{-1} R^\top_\Gamma$ 
and of its inverse $R_\Gamma^{-1} S_\Gamma R^{-\top}_\Gamma$.
Furthermore, using the truncated EVD, \cref{eq:proposed_deflation_prec} is an 
adapted deflation preconditioner for $R_\Gamma^{-\top} S_\Gamma R_\Gamma^{-1}$.
Indeed, as the columns of $U_k$ are orthonormal eigenvectors, we have from \cref{eq:Rt_S-1_R} that
$R_\Gamma S_\Gamma^{-1} R_\Gamma^{\top} U_k = U_k (I + \Sigma_k)$. Hence
$R_\Gamma^{-\top} S_\Gamma R_\Gamma^{-1} U_k = U_k (I + \Sigma_k)^{-1}$  and the preconditioned matrix is
\begin{align*}
  \calPAD R_\Gamma^{-\top} S_\Gamma R_\Gamma^{-1} &= R_\Gamma^{-\top} S_\Gamma R_\Gamma^{-1} + U_k \Sigma_k (I + \Sigma_k)^{-1}U_k^\top\\
                                                  &= R_\Gamma^{-\top} S_\Gamma R_\Gamma^{-1} + U_k \left(\left(I+ \Sigma_k\right)-I\right) (I + \Sigma_k)^{-1} U_k^\top \\
                                                &= R_\Gamma^{-\top} S_\Gamma R_\Gamma^{-1} - U_k (I + \Sigma_k)^{-1} U_k^\top + U_k U_k^\top,
\end{align*}
which has the same form as the ideal adapted preconditioned matrix \cref{eq:ideal adapted}. 

Note that given the matrix $\breve{U}_k$ in the approximation \cref{eq:lr_approximation_S-1_A-I}, 
then following \cref{sec:two_level_prec}, we can define a deflation preconditioner for $R_\Gamma^{-\top} S_\Gamma R_\Gamma^{-1}$.
Setting $E_k = \breve{U}_k^\top R_\Gamma^{-\top} S_\Gamma R_\Gamma^{-1} \breve{U}_k$ and 
$Q= \breve{U}_k E^{-1} \breve{U}_k^\top$,  the deflation preconditioner is
\begin{equation}
  \label{eq:deflation_comp_from_low_rank}
  \calPADi{1-} = I - QR_\Gamma^{-\top} S_\Gamma R_\Gamma^{-1} + Q.
\end{equation}
The preconditioned Schur complement  $\calPADi{1-} R_\Gamma^{-\top} S_\Gamma R_\Gamma^{-1}$ 
is spectrally similar to $R_\Gamma^{-1} \calPADi{1-} R_\Gamma^{-\top} S_\Gamma$
and thus
\begin{equation}
 \label{eq:our_two_level_preconditioner}
  \calMADi{1-} = R_\Gamma^{-1} \calPADi{1-} R_\Gamma^{-\top}
\end{equation}
is a two-level preconditioner for $S_\Gamma$. 

\subsection{Lanczos versus Nystr\"om}
The two-level preconditioner \cref{eq:our_two_level_preconditioner} relies on 
computing a low-rank approximation \cref{eq:lr_approximation_S-1_A-I}.
We now consider the difference between using the Lanczos and Nystr\"om methods for this.

Both methods require the application of $R_\Gamma^{-\top} A_{\Gamma I} S_I^{-1} A_{I \Gamma} R_\Gamma^{-1}$ 
to a set of $k+p$ vectors, where $k > 0$ is the required rank and $p \ge0$.
Because explicitly computing the SPD matrix $S_I = A_I - A_{I\Gamma}A_\Gamma^{-1} A_{\Gamma I}$
and factorizing it is prohibitively expensive, applying $S_I^{-1}$ 
must be done using an iterative solver.

The Lanczos method builds a Krylov subspace of dimension $k+p$ in order to compute a low-rank approximation.
Therefore,  $k+p$ linear systems must be solved, each with one right-hand side, first for $R_\Gamma$, 
then for $S_I$, and then for $R_\Gamma^\top$.
However, the Nystr\"om method requires the solution of only one linear system with 
$k+p$ right-hand sides for $R_\Gamma$, then for $S_I$, and then for $R_\Gamma^\top$.
This allows the use of matrix-matrix operations rather than less efficient matrix-vector operations.
Moreover, as we will illustrate in~\cref{sec:numerical_experiments},
block Krylov subspace methods, such as block CG \cite{Ole80},  for solving the system
with $S_I$ yield faster convergence than their classical counterparts. 
When the Nystr\"om method is used, 
we call the resulting preconditioner \cref{{eq:our_two_level_preconditioner}} the {\it Nystr\"om--Schur preconditioner}.

\subsection{Avoiding computations with $R_\Gamma$}
\label{sec:further_approximation}
For  large scale problems, computing the Cholesky factorization $A_\Gamma = R_\Gamma^\top R_\Gamma$ is prohibitive
and so we would like to avoid computations with $R_\Gamma$. 
\cblue{We can achieve this by using an iterative solver to solve linear systems with $A_\Gamma$. Note that this is possible
when solving the generalized eigenvalue problem \cref{eq:GEVP}.
}
Because $A_\Gamma$ is typically well conditioned, so too is $R_\Gamma$. 
Thus, we can reduce the cost of computing the Nystr\"om--Schur preconditioner by approximating the SPSD matrix 
$A_{\Gamma I} S_I^{-1} A_{I \Gamma}$ 
(or even by approximating $S_I^{-1}$). Of course, this needs to be done 
without seriously adversely affecting the preconditioner quality.
Using an approximate factorization
\begin{equation}
  \label{eq:internal_term_lr_approximation_1}
  A_{\Gamma I} S_I^{-1} A_{I \Gamma} \approx \widetilde{W}_k \widetilde{\Sigma}_k \widetilde{W}_k^\top,
\end{equation}
an alternative deflation preconditioner is 
\begin{align*}
  \calP_2 &= I + R_\Gamma^{-\top} \widetilde{W}_k \widetilde{\Sigma}_k \widetilde{W}_k^\top R_\Gamma^{-1},\\
          &= R_\Gamma^{-\top} \left(A_\Gamma + \widetilde{W}_k \widetilde{\Sigma}_k \widetilde{W}_k^\top \right) R_\Gamma^{-1}.
\end{align*}
The preconditioned Schur complement  $\calP_2 R_\Gamma^{-\top} S_\Gamma R_\Gamma^{-1}$ 
is spectrally similar to $R_\Gamma^{-1} \calP_2 R_\Gamma^{-\top} S_\Gamma$ and, setting $\widetilde{Z}_k = A_\Gamma^{-1} \widetilde{W}_k$,
we have
\begin{equation}
  \label{eq:left_prec_mat}
  \calM_2 S_\Gamma = R_\Gamma^{-1} \calP_2 R_\Gamma^{-\top} S_\Gamma = (A_\Gamma^{-1} + \widetilde{Z}_k \widetilde{\Sigma}_k \widetilde{Z}_k^\top) S_\Gamma.
\end{equation}
Thus $\calM_2 = A_\Gamma^{-1} + \widetilde{Z}_k \widetilde{\Sigma}_k \widetilde{Z}_k^\top$ is a 
variant of the Nystr\"om--Schur preconditioner for $S_\Gamma$
that avoids computing $R_\Gamma$.

Alternatively, assuming we have an approximate factorization
\begin{equation}
  \label{eq:internal_term_lr_approximation_2}
  S_I^{-1} \approx \widehat{V}_k \widehat{\Sigma}_k \widehat{V}_k^\top,
\end{equation}
yields
$$ \calP_3 = I + R_\Gamma^{-\top} A_{\Gamma I}  \widehat{V}_k \widehat{\Sigma}_k \widehat{V}_k^\top A_{I \Gamma} R_\Gamma^{-1}.$$
Again,  $\calP_3 R_\Gamma^{-\top} S_\Gamma R_\Gamma^{-1}$ 
is spectrally similar to $R_\Gamma^{-1} \calP_3 R_\Gamma^{-\top} S_\Gamma$ and, setting
$\widehat{Z}_k = A_\Gamma^{-1} A_{\Gamma I} \widehat{V}_k$, we have
\begin{equation}
  \label{eq:left_prec_mat2}
  \calM_3 S_\Gamma = R_\Gamma^{-1} \calP_3 R_\Gamma^{-\top} S_\Gamma= (A_\Gamma^{-1} + \widehat{Z}_k \widehat{\Sigma}_k \widehat{Z}_k^\top) S_\Gamma,
\end{equation}
which gives another variant of the Nystr\"om--Schur preconditioner.
In a similar way to defining $\calMADi{1-}$ \cref{eq:deflation_comp_from_low_rank}, 
we can define $\calMADi{2-}$ and $\calMADi{3-}$. Note that $\calMADi{2-}$ and $\calMADi{3-}$ also avoid computations with $R_\Gamma$.

\subsection{Nystr\"om--Schur preconditioner}
\Cref{alg:prec1} presents the construction of the Nystr\"om--Schur preconditioner $\calM_2$;
an analogous derivation yields the variant $\calM_3$.
\begin{algorithm}
  \caption{Construction of the Nystr\"om--Schur preconditioner \cref{eq:left_prec_mat}}
  \label{alg:prec1}
  \begin{algorithmic}[1]
    \Require{$A$ in block form \cref{eq:perm_A}, $k>0$ and $p\ge0$ ($k ,p  \ll n_\Gamma$) and $\varepsilon>0$.}
    \Ensure{Two-level preconditioner for the $n_\Gamma \times n_\Gamma$ Schur complement $S_\Gamma$.}
    \State{Draw a random matrix $G\in \mathbb{R}^{n_\Gamma\times (k+p)}$.}
    \State{Compute $F = A_{I\Gamma}G$.}\label{line:compA_IG2}
    \State{Solve $S_I X = F$.}\label{line:solveS_I2}
    \State{Compute $Y = A_{\Gamma I}X$.}\label{line:compA_GI2}\label{line:y=qr}
    \State{Compute $Y = QR$}.
    \State{Set $C = G^\top Y$.}\label{line:GtY}
    \State{Compute the EVD $C = V_1 D_1 V_1^\top + V_2 D_2 V_2^\top$, 
        where $D_1$ contains all the eigenvalues that are at least $\varepsilon$.}\label{line:eigendecomposition}
    \State{Set $T = R V_1 D_1^{-1} V_1^\top R^\top$.}\label{line:T}
    \State{Compute the EVD $T = W E W^\top$.}\label{line:evdT}
    \vspace{0.1cm}
    \State{Set $\widetilde{U} = YW(:, 1 : k)$, $\Sigma = E(1:k, 1:k)$.}\label{line:eigenvector_extension}
    \State{Solve $A_\Gamma Z = \widetilde{U}$.}\label{line:AgU}
    \State{Define the preconditioner $\calM_2 = A_\Gamma^{-1} + Z \Sigma Z^\top$.}\label{line:prec_form}
  \end{algorithmic}
\end{algorithm}
Step 3 is the most expensive step, that is, solving the $n_I \times n_I$ SPD linear system
\begin{equation}
  \label{eq:S_IX=F}
  S_I X = F,
\end{equation}
where $F \in \R^{n_I\times (k+p)}$ and 
$S_I = A_I - A_{I\Gamma} A_{\Gamma}^{-1} A_{\Gamma I}$.
Using an iterative solver requires a linear system solve with $A_\Gamma$ on each iteration.
Importantly for efficiency, the number of iterations can
be limited by employing a large relative tolerance when solving \cref{eq:S_IX=F}
without adversely \cblue{affecting} the performance of the resulting preconditioner.
Numerical experiments in \cref{sec:numerical_experiments} illustrate this robustness.

\cblue{Observe that applying $\calM_2$ to a vector requires 
the solution of a linear system with $A_\Gamma$ and a low rank correction; see Step~\ref{line:prec_form}. }

\subsection{Estimation of the Spectral Condition Number}
\label{sec:spectral_analysis}
In this section, we provide an expectation of the spectral condition 
number of $S_\Gamma$ preconditioned by the Nystr\"om--Schur preconditioner.
Saibaba~\cite{Sai19} derives bounds on the angles between the 
approximate singular vectors computed using a randomized singular
value decomposition and the exact singular vectors of a matrix.
It is straightforward to derive the corresponding bounds for the Nystr\"om method.
Let $\Pi_{M}$ denote the orthogonal projector on the space spanned by the columns of the matrix $M$.
Let $(\lambda_j,u_j)$, $j = 1, \ldots, k$, be the dominant
eigenpairs of $R_\Gamma^{-\top} S_\Gamma R_\Gamma^{-1}$.
Following the notation in~\cref{alg:nystrom}, the angle $\theta_j = \angle(u_j, \widetilde{U})$
between the approximate eigenvectors $\widetilde{U}\in \R^{n_\Gamma \times (k+p)}$ of 
$R_\Gamma^{-\top} S_\Gamma R_\Gamma^{-1}$  and the exact eigenvector $u_j \in \R^{n_\Gamma}$
satisfies
\begin{equation}
  \label{eq:bound_theta_k}
  \sin \angle(u_j, \widetilde{U}) = \|u_j - \Pi_{\widetilde{U}} u_j\|_2 \le \gamma_{j,k}^{q+1} c, 
\end{equation}
where $q$ is the power iteration count (recall \cref{eq:power}), 
$\gamma_{j,k}$ is the gap between $\lambda_j^{-1}-1$ and $\lambda_{k+1}^{-1}-1$ given by
\begin{equation}
\label{eq:gamma}
\gamma_{j,k} = (\lambda^{-1}_{k + 1} - 1)/(\lambda^{-1}_{j} - 1),
\end{equation}
and $c$ has the expected value 
\begin{equation}
  \label{eq:constant}
  \mathbb{E}(c) = \sqrt{\frac{k}{p-1}} + \frac{e\sqrt{(k + p)(n_\Gamma - k)}}{p},
\end{equation}
where $k$ is the required rank and $p\ge2$ is the oversampling parameter.
Hence,
\begin{equation}
  \label{eq:_estimated_bound_theta_k}
  \mathbb{E}\left(\sin \angle(u_j, \widetilde{U})\right) = \mathbb{E}\left(\|u_j - \Pi_{\widetilde{U}} u_j\|_2\right) \le \gamma_{j,k}^{q+1} \mathbb{E}(c).
\end{equation}
Note that if $\lambda_j \le 1/2$ then $\gamma_{j,k} \le 2 \lambda_{j}/\lambda_{k + 1}$ ($j = 1, \ldots, k$).

\begin{proposition}
  \label{prop:expectedkappa}
  Let the EVD of the SPD matrix $I - H = R_\Gamma^{-\top} S_\Gamma R_\Gamma^{-1}$ be 
  $$\begin{bmatrix} U_\perp& U_k \end{bmatrix} \begin{bmatrix} \Lambda_\perp & \\ & \Lambda_k\end{bmatrix} \begin{bmatrix} U_\perp^{\top}\\U_k^{\top} \end{bmatrix},$$ 
    where $\Lambda_\perp \in \R^{(n_\Gamma - k) \times (n_\Gamma - k)}$ and $\Lambda_k \in \R^{k \times k}$ 
    are diagonal matrices with the eigenvalues $(\lambda_{i})_{k\ge i \ge 1}$ and $(\lambda_{i})_{n_\Gamma\ge i \ge k + 1}$, 
    respectively, in decreasing order.
    Furthermore, assume that $\lambda_k \le 1/2$.
  Let the columns of $\widetilde{U}\in\R^{n_\Gamma \times (k+p)}$ be the approximate eigenvectors of $I-H$ computed using 
  the Nystr\"om method and let 
  \[
    \calP = I - (I-H) \widetilde{U} E^{-1} \widetilde{U}^\top \quad \mbox{with}  \quad E = \widetilde{U}^\top (I - H) \widetilde{U},
  \]
  be the associated deflation preconditioner.
    Then, the effective condition number of the two-level preconditioner
    $\calP(I-H) = \calP R_\Gamma^{-\top} S_\Gamma R_\Gamma^{-1}$ satisfies
    \begin{equation}
      \label{eq:expected_condition_number}
      \mathbb{E}\left(\sqrt{\kappa_{\text{\emph{eff}}}\left(\calP (I-H)\right)}\right) \le c_1 \sqrt{\frac{\lambda_{n_\Gamma}}{\lambda_{k + 1}}},
    \end{equation}
    where $c_1^2$ is independent of the spectrum of $I-H$ and can be bounded by a polynomial of degree 3 in $k$.
\end{proposition}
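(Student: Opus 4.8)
The plan is to work with the symmetric matrix $A := I - H = R_\Gamma^{-\top} S_\Gamma R_\Gamma^{-1}$ and to reduce the effective condition number of the deflated operator to a single constrained Rayleigh-quotient estimate, which is exactly what the angle bound \cref{eq:bound_theta_k} controls. Since $A$ is SPD and $\calP$ is the deflation preconditioner \cref{eq:deflation_prec} with deflation matrix $\widetilde U$, the operator $\calP A = A - A\widetilde U E^{-1}\widetilde U^\top A$ is symmetric positive semidefinite. Setting $\Phi = A^{1/2}\widetilde U$, the correction factor $A^{1/2}\widetilde U E^{-1}\widetilde U^\top A^{1/2} = \Phi(\Phi^\top\Phi)^{-1}\Phi^\top$ is the orthogonal projector $\Pi_\Phi$ onto $\mathrm{range}(\Phi)$, so $\calP A = A^{1/2}(I-\Pi_\Phi)A^{1/2}$ and its nonzero eigenvalues coincide with those of $(I-\Pi_\Phi)A(I-\Pi_\Phi)$. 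Hence the $k+p$ columns of $\widetilde U$ contribute zero eigenvalues, the largest nonzero eigenvalue obeys $\lambda_{\max}^+\le\lambda_{\max}(A)=\lambda_{n_\Gamma}$, and, substituting $y=A^{1/2}z$ for $z\perp\Phi$, the smallest nonzero eigenvalue is $\lambda_{\min}^+ = \big(\max_{0\ne y\perp\widetilde U} y^\top A^{-1}y/\|y\|_2^2\big)^{-1}$. This already gives $\kappa_{\text{eff}}(\calP A)\le \lambda_{n_\Gamma}\,\max_{0\ne y\perp\widetilde U}\big(y^\top A^{-1}y/\|y\|_2^2\big)$.

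The heart of the argument is to bound this constrained maximum. I would expand a unit vector $y\perp\widetilde U$ in the eigenbasis of $A$, $y=\sum_{i=1}^{n_\Gamma}\alpha_i u_i$, so that $y^\top A^{-1}y=\sum_i \alpha_i^2/\lambda_i$, and split the sum at $k$. The tail is harmless, $\sum_{i\ge k+1}\alpha_i^2/\lambda_i\le \lambda_{k+1}^{-1}\sum_i\alpha_i^2 = \lambda_{k+1}^{-1}$, because $\lambda_i\ge\lambda_{k+1}$ there. For the dangerous head $i\le k$ (where $\lambda_i$ may be arbitrarily small) the constraint $\Pi_{\widetilde U}y=0$ gives $\alpha_i = u_i^\top y = \big((I-\Pi_{\widetilde U})u_i\big)^\top y$, so $|\alpha_i|\le \sin\angle(u_i,\widetilde U)$. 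Here the reformulation pays off: invoking \cref{eq:bound_theta_k}, the bound $\gamma_{i,k}^{2(q+1)}\le\gamma_{i,k}^2$, the gap estimate $\gamma_{i,k}\le 2\lambda_i/\lambda_{k+1}$ (which holds for $i\le k$ precisely because $\lambda_k\le 1/2$), and $\lambda_i\le\lambda_{k+1}$, each head term has its ill-conditioning cancelled, $\alpha_i^2/\lambda_i \le \gamma_{i,k}^2 c^2/\lambda_i \le 4c^2\lambda_i/\lambda_{k+1}^2 \le 4c^2/\lambda_{k+1}$. Summing the $k$ head terms with the tail yields $y^\top A^{-1}y \le (1+4kc^2)/\lambda_{k+1}$, and therefore $\kappa_{\text{eff}}(\calP A)\le (1+4kc^2)\,\lambda_{n_\Gamma}/\lambda_{k+1}$ for every realization of the random starting matrix. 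I expect this step to be the main obstacle, because it is the exact compensation between the approximation quality $\gamma_{i,k}\sim\lambda_i/\lambda_{k+1}$ and the factor $1/\lambda_i$ that makes the constant independent of the (possibly tiny) eigenvalues $\lambda_1,\dots,\lambda_k$; a spectrum-independent bound is not available from the cruder Kahl--Rittich estimate \cref{eq:Kahl_cond_bound} applied with the largest principal angle, so the per-direction accounting is essential.

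Finally I would pass to expectations while keeping the dependence on the random quantity $c$ linear. Taking square roots and using $\sqrt{1+4kc^2}\le 1+2\sqrt{k}\,c$ gives $\sqrt{\kappa_{\text{eff}}(\calP A)}\le (1+2\sqrt{k}\,c)\sqrt{\lambda_{n_\Gamma}/\lambda_{k+1}}$; taking $\mathbb{E}$ and using linearity of expectation then yields \cref{eq:expected_condition_number} with $c_1 = 1 + 2\sqrt{k}\,\mathbb{E}(c)$. Substituting the value of $\mathbb{E}(c)$ from \cref{eq:constant} shows that $c_1$ depends only on $k$, $p$ and $n_\Gamma$, and not on the spectrum of $I-H$. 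Squaring and bounding the mixed square-root terms by the arithmetic--geometric-mean inequality, the dominant contribution is $4k\,\big(e\sqrt{(k+p)(n_\Gamma-k)}/p\big)^2 = 4e^2 k(k+p)(n_\Gamma-k)/p^2$, which is cubic in $k$; all remaining terms are of lower degree, so $c_1^2$ is majorized by a polynomial of degree $3$ in $k$, as claimed.
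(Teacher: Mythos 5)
Your proof is correct, but it takes a genuinely different route from the paper. The paper verifies the Kahl--Rittich weak approximation property \cref{eq:Kahl_ineq}: it expands an arbitrary test vector $x$ in the eigenbasis, splits the sum at an arbitrary index $t\le k$, and bounds $\|x-\Pi_{\widetilde U}x\|_2$ by a multiple of $\|x\|_{I-H}/\sqrt{\|I-H\|_2}$, which by \cite[Theorem 3.4]{KahR17} yields the condition number bound; keeping $t$ free produces the sharper $\min_t$ bound \cref{eq:tight_bound}, which also retains the power-iteration factor $\gamma_{t,k}^{2q+1}$ that is plotted in the accompanying figure. You instead bypass the Kahl--Rittich machinery entirely: writing $\calP(I-H)=(I-H)^{1/2}(I-\Pi_\Phi)(I-H)^{1/2}$ with $\Phi=(I-H)^{1/2}\widetilde U$, you identify $\kappa_{\text{eff}}$ with a Rayleigh quotient of $(I-H)^{-1}$ constrained to $\widetilde U^\perp$, and then run the same head/tail split directly on $\sum_i\alpha_i^2/\lambda_i$. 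The essential cancellation is identical in both arguments --- the approximation quality $\gamma_{i,k}\lesssim 2\lambda_i/\lambda_{k+1}$ (valid under $\lambda_k\le 1/2$) exactly compensates the dangerous factor $\lambda_i^{-1}$ for $i\le k$, which is why the constant ends up independent of the small eigenvalues --- so your instinct that this per-direction accounting is the crux, and that the crude bound \cref{eq:Kahl_cond_bound} cannot deliver it, matches the paper. What your route buys is a self-contained, arguably more elementary derivation and a cleaner passage to expectations: your per-realization bound is affine in $c$, so $\mathbb{E}$ applies by linearity, whereas the paper pushes the expectation inside a $\max(\,\cdot\,,1)$, a step that strictly speaking needs justification since the map is convex. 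What you give up is the $q$-dependence (you discard it via $\gamma^{q+1}\le\gamma$) and the tighter $\min_{1\le t\le k}$ form; for the proposition as stated neither is needed, and your $c_1^2\le 2+8k\left(\mathbb{E}(c)\right)^2$ is degree $3$ in $k$ exactly as claimed.
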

\begin{proof}
  Let $x \in \R^{n_\Gamma}$. Since $u_1, \ldots, u_{n_\Gamma}$ form an orthogonal basis of $\R^{n_\Gamma}$, there 
  exists $\alpha_1, \ldots, \alpha_{n_\Gamma} \in \R$ such that $x = \sum_{i=1}^{n_\Gamma}\alpha_i u_i$.
  In \cite[Theorem 3.4]{KahR17}, Kahl and Rittich show that, if for some positive constant $c_K$,  $\widetilde{U}$ satisfies
  \begin{equation}
    \label{eq:Kahl_ineq}
    \| x - \Pi_{\widetilde{U}} x\|_2^2 \le c_K \frac{\|x\|_{I-H}^2}{{\|I-H\|_2}},
  \end{equation}
  then the effective condition number of $\calP (I-H)$ satisfies 
  $$\kappa_{\text{eff}}\left(\calP (I-H)\right) \le c_K.$$
  Let $t\le k$ and consider
  \begin{align*}
    \| x - \Pi_{\widetilde{U}} x\|_2 &=   \| \sum_{i=1}^{n_\Gamma} \alpha_i u_i - \Pi_{\widetilde{U}} \sum_{i=1}^{n_\Gamma} \alpha_i u_i\|_2\\
                                 &\le \| \sum_{i=t+1}^{n_\Gamma} (I - \Pi_{\widetilde{U}}) \alpha_i u_i \|_2 + \sum_{i = 1}^t |\alpha_i|  \|u_i - \Pi_{\widetilde{U}} u_i\|_2\\
                                 &\le \| \sum_{i=t+1}^{n_\Gamma} \alpha_i u_i \|_2 + \sum_{i = 1}^t |\alpha_i|  \|u_i - \Pi_{\widetilde{U}} u_i\|_2.
  \end{align*}
  The last inequality is obtained using the fact that $I - \Pi_{\widetilde{U}}$ is an orthogonal projector.
  Now bound each term on the right separately. We have
  \begin{align*}
      \| \sum_{i=t+1}^{n_\Gamma} \alpha_i u_i \|_2 
                                 &\le\frac{1}{ \sqrt{\lambda_{t+1}}} \| \sum_{i=t+1}^{n_\Gamma} \sqrt{\lambda_{t+1}}\alpha_i u_i \|_2 
                                 \le\frac{1}{ \sqrt{\lambda_{t+1}}} \| \sum_{i=t+1}^{n_\Gamma} \sqrt{\lambda_{i}}\alpha_i u_i \|_2\\
                                 &\le\frac{1}{ \sqrt{\lambda_{t+1}}}  \sum_{i=t+1}^{n_\Gamma} {\lambda_{i}}\alpha_i^2 
                                 = \frac{1}{\sqrt{\lambda_{t+1}}} \| x - \Pi_{U_t} x \|_{I-H}
                                 = \sqrt{\frac{\lambda_{n_\Gamma}}{\lambda_{t+1}}} \frac{\| x - \Pi_{U_t} x \|_{I-H}}{\sqrt{\|I-H\|_2}}.
  \end{align*}
  From \cref{eq:gamma},  $\gamma_{i,k} \le 1$ for $i = 1, \ldots, t$, thus,
  \begin{align*}
    \sum_{i=1}^t|\alpha_i|\|u_i-\Pi_{\widetilde{U}}u_i\|_2
                                 &\le \sum_{i = 1}^t |\alpha_i| \gamma_{i,k}^{q+1} c
                                 \le c \gamma_{t,k}^{q+\frac{1}{2}} \sum_{i = 1}^t |\alpha_i| \sqrt{\gamma_{i,k}}\\
                                 &= c \gamma_{t,k}^{q+\frac{1}{2}}\sqrt{\lambda_{k+1}^{-1} - 1} \sum_{i = 1}^t |\alpha_i| \frac{1}{\sqrt{\lambda_{i}^{-1} - 1}}\\
                                 &\le c \gamma_{t,k}^{q+\frac{1}{2}} \frac{1}{\sqrt{\lambda_{k+1}}} \sum_{i = 1}^t |\alpha_i| \frac{1}{\sqrt{\lambda_{i}^{-1} - 1}}.
  \end{align*}
  Assuming that $\lambda_{i} \le 1/2$ for $ i = 1, \ldots, t$, we have
  \begin{align*}
    \sum_{i=1}^t|\alpha_i|\|u_i-\Pi_{\widetilde{U}}u_i\|_2 &\le \sqrt{2}c \gamma_{t,k}^{q+\frac{1}{2}}\frac{1}{\sqrt{\lambda_{k+1}}} \sum_{i = 1}^t |\alpha_i| \frac{1}{\sqrt{\lambda_{i}^{-1}}}\\
                                                       &\le \sqrt{2}c \gamma_{t,k}^{q+\frac{1}{2}}\frac{1}{\sqrt{\lambda_{k+1}}} \sum_{i = 1}^t |\alpha_i| \sqrt{\lambda_{i}}.
  \end{align*}
  Using the fact that the $l_1$ and $l_2$ norms are equivalent, we have
  \begin{align*}
    \sum_{i=1}^t|\alpha_i|\|u_i-\Pi_{\widetilde{U}}u_i\|_2 &\le c\sqrt{2t}\gamma_{t,k}^{q+\frac{1}{2}} \frac{1}{\sqrt{\lambda_{k+1}}} \sqrt{\sum_{i = 1}^t \alpha_i^2 \lambda_i} \\
                                 &= c \sqrt{2t}\gamma_{t,k}^{q+\frac{1}{2}} \frac{1}{\sqrt{\lambda_{k+1}}} \| \Pi_{U_t} x \|_{I-H}\\
                                 &= c\sqrt{2t}\gamma_{t,k}^{q+\frac{1}{2}} \sqrt{\frac{\lambda_{n_\Gamma}}{\lambda_{k+1}}} \frac{\| \Pi_{U_t} x \|_{I-H}}{\sqrt{\|I-H\|_2}}.
  \end{align*}
  Since $\lambda_k\ge \lambda_t$ we have 
  \begin{align*}
     \sum_{i=1}^t|\alpha_i|\|u_i-\Pi_{\widetilde{U}}u_i\|_2 &\le c\sqrt{2t}\gamma_{t,k}^{q+\frac{1}{2}} \sqrt{\frac{\lambda_{n_\Gamma}}{\lambda_{t+1}}} \frac{\| \Pi_{U_t} x \|_{I-H}}{\sqrt{\|I-H\|_2}}.
  \end{align*}
It follows that
\begin{align*}
    \| x - \Pi_{\widetilde{U}} x\|_2 &\le \sqrt{\frac{\lambda_{n_\Gamma}}{\lambda_{t+1}}} \frac{\| x - \Pi_{U_t} x \|_{I-H}}{\sqrt{\|I-H\|_2}} + c \sqrt{2t} \gamma_{t,k}^{q+\frac{1}{2}} \sqrt{\frac{\lambda_{n_\Gamma}}{\lambda_{t+1}}} \frac{\| \Pi_{U_t} x \|_{I-H}}{\sqrt{\|I-H\|_2}}\\  
                                 &\le \sqrt{2} \max(c\sqrt{2t}\gamma_{t,k}^{q+\frac{1}{2}} , 1) \sqrt{\frac{\lambda_{n_\Gamma}}{\lambda_{t+1}}} \frac{\|x\|_{I-H}}{\sqrt{\|I-H\|_2}}.
  \end{align*}
  Hence \cref{eq:Kahl_ineq} is satisfied and we have
  \[
    \kappa_{\text{eff}}\left(\calP (I-H)\right) \le 2 \max(2c^2 t\gamma_{t,k}^{2q+1}, 1) \frac{\lambda_{n_\Gamma}}{\lambda_{t+1}}.
  \]
Thus,
 \[
    \mathbb{E}\left(\sqrt{\kappa_{\text{eff}}\left(\calP (I-H)\right)} \right) \le \sqrt{2} \max( \mathbb{E}(c) \sqrt{2t}\gamma_{t,k}^{q+\frac{1}{2}} , 1) \sqrt{\frac{\lambda_{n_\Gamma}}{\lambda_{t+1}}}.
  \]
  Since $t$ is chosen arbitrarily between $1$ and $k$ we have
 \begin{equation}
   \label{eq:tight_bound}
   \mathbb{E}\left(\sqrt{\kappa_{\text{eff}}\left(\calP (I-H)\right)} \right) \le \sqrt{2} \min_{1\le t \le k}\left(\max\left( \mathbb{E}(c) \sqrt{2t}\gamma_{t,k}^{q+\frac{1}{2}} , 1\right)\sqrt{\frac{\lambda_{n_\Gamma}}{\lambda_{t+1}}}\right).
 \end{equation}

  Because $\mathbb{E}(c)$ can be bounded by a polynomial of degree 1 in $k$ and $\gamma_{t,k}\le 1$, $\max(4 t \gamma_{t,k}^{2q+1} \left(\mathbb{E}(c)\right)^2, 2)$ 
  can be bounded by a polynomial of degree 3 in $k$ independent of the spectrum of $I-H$.
\end{proof}
Note that, in practice, when the problem is challenging, a few eigenvalues 
of $R_\Gamma^{-\top} S_\Gamma R_\Gamma^{-1}$ are close to the origin. This is reflected in a rapid 
and exponential decay of the values of the entries of $\Lambda^{-1} - I$.
\Cref{fig:estkappa} depicts the bound obtained in~\cref{prop:expectedkappa} for different values of $k$ and $q$ for problem s3rmt3m3.

\begin{figure}[htbp]
  \label{fig:estkappa}
  \centering
    \includegraphics[height=4.5cm]{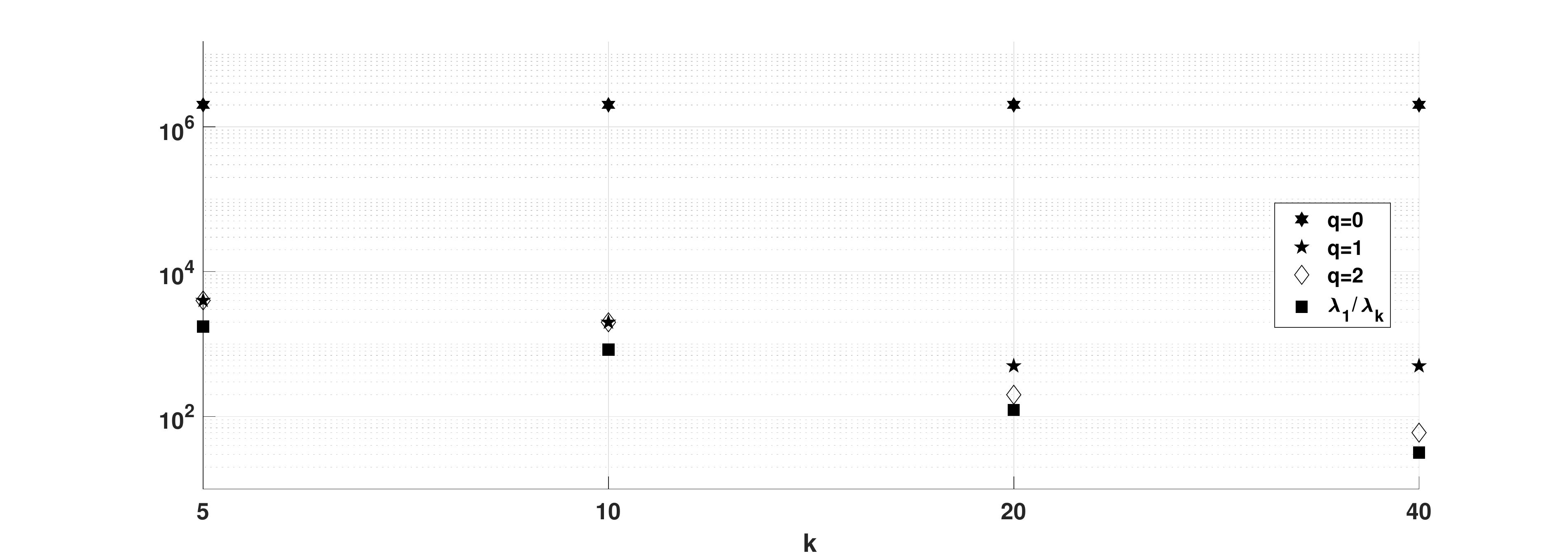}
  \caption{Problem s3rmt3m3: Values of the bound \cref{eq:tight_bound} on
  $\left(\mathbb{E}\left(\sqrt{\kappa_{\text{eff}}\left(\calP (I-H)\right)} \right)\right)^2$  for a range of values of $k$ and $q$. }
\end{figure}

\section{Numerical Experiments}
\label{sec:numerical_experiments}
We use 64 subdomains (i.e., $A_I$ is a 64-block diagonal matrix) for each of our test matrices with the exception of one
problem.  The matrix nd3k is much denser than the others, and we use only two blocks (to reduce the runtime).
For comparison purposes, we include results for the Schur complement
preconditioners $\widetilde{S}_1$ and $\widetilde{S}_2$ given by \cref{eq:S1} and \cref{eq:S2}, respectively.
As demonstrated in~\cref{sec:nonefficientLanczos}, the latter is too costly to be practical, 
however, its performance is the ideal since it guarantees the smallest spectral condition number 
for a fixed deflation subspace.
Therefore, the quality of the Nystr\"om--Schur preconditioner will be measured in terms of how close 
its performance is to that of $\widetilde{S}_2$ and the reduction in iteration it gives compared to $\widetilde{S}_1$.
For a given problem, the right-hand side vector is the same for all the
tests: it is generated randomly
with entries from the standard normal distribution. 
The relative convergence tolerance for PCG is $10^{-6}$.
Unless otherwise specified, the parameters within Nystr\"om's method (\cref{alg:nystrom}) are rank $k=20$, 
oversampling $p=0$, \cblue{and power iteration $q=0$.}
To ensure fair comparisons, the random matrices generated in different runs of the Nystr\"om algorithm use the same seed.
We employ the Nystr\"om--Schur variant $\calM_2$ \cref{eq:left_prec_mat} (recall that its construction does not require the 
Cholesky factors of $A_\Gamma$).
The relative convergence tolerance used when solving the SPD system \cref{eq:S_IX=F} is $\varepsilon_{S_I}= 0.1$.
This system \cref{eq:S_IX=F} is preconditioned by the block diagonal matrix $A_I$.
We denote by $it_{S_I}$  the number of block PCG iterations required to solve \cref{eq:S_IX=F}
during the construction of the   Nystr\"om--Schur preconditioners (it is zero for $\widetilde{S}_1$ and $\widetilde{S}_2$),
and by $it_{\text{PCG}}$ the  PCG iteration count for solving \cref{eq:Sx=b}.
The     total number of iterations is $it_{\text{total}} = it_{S_I} + it_{\text{PCG}}$.
We use the code \cite{AldRS21} to generate the numerical experiments.

\subsection{Linear system with $S_I$}
We start by considering how to efficiently compute an approximate  
solution of \cref{eq:S_IX=F}.

\subsubsection{Block and classic CG}
The system  \cref{eq:S_IX=F} has $k+p$ right hand sides.
The number of iterations required by PCG to solve each right hand side is different
and the variation can be large; this
is illustrated in \Cref{fig:histogram1rhs} for problem bcsstk38. \cblue{Here we report the
number of right hand sides for which the iteration count lies  in the interval $[k,k+10)$,
$k = 100, \ldots, 240$. For example, there are 4 right hand sides for which the count
is between 110 and 119.}
Similar behaviour was observed for our other test problems.
\begin{figure}
  \label{fig:histogram1rhs}
  \centering
    \includegraphics[height=4.5cm]{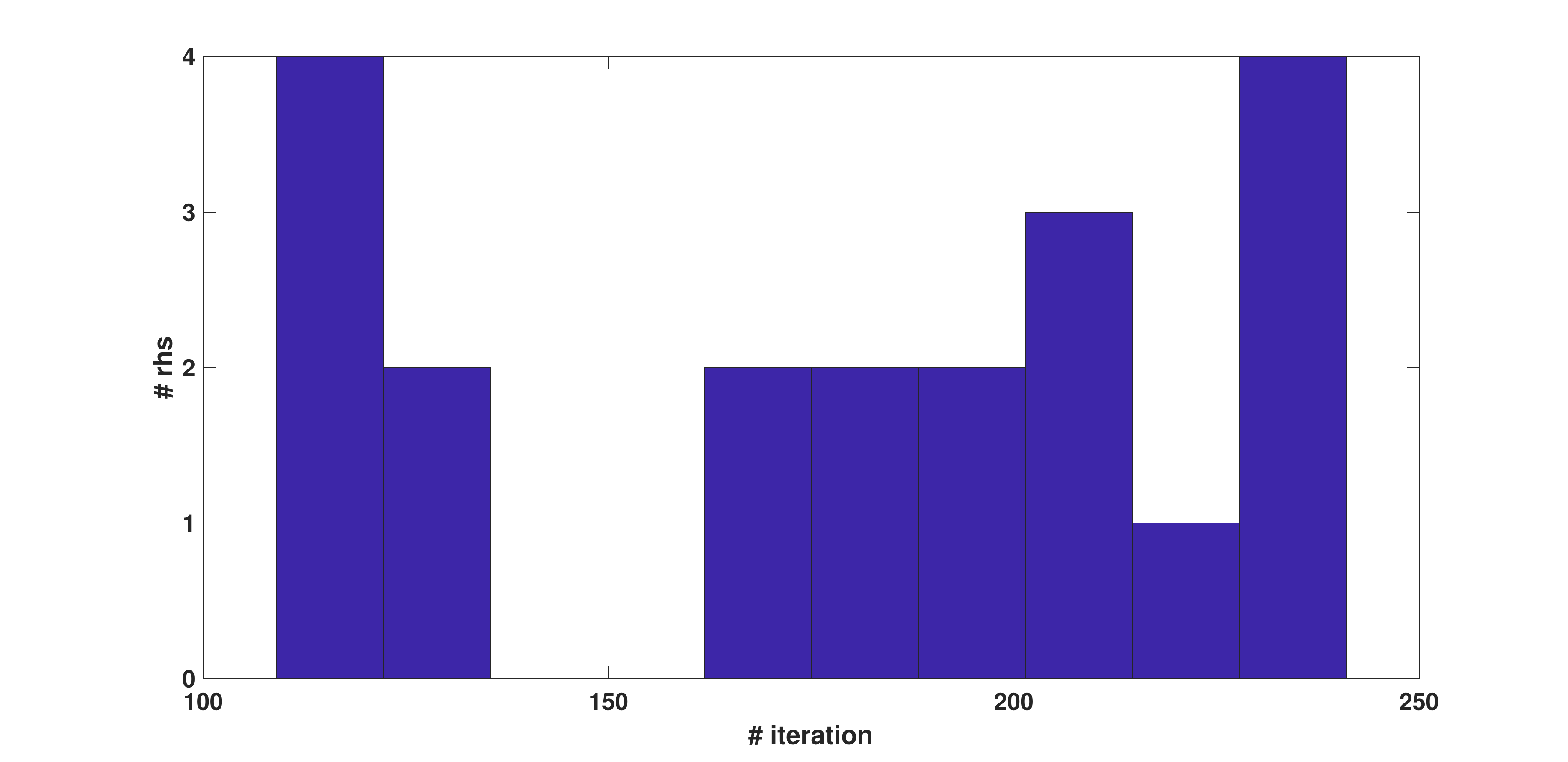}
  \caption{Histogram of the PCG iteration counts for \cref{eq:S_IX=F} for problem bcsstk38.
 \cblue{The number of right hand sides for which the iteration count is between $[k,k+10)$,
$k = 100, \ldots, 240$, is given.} }
\end{figure}

\Cref{tab:blockvsclassic}  reports the iteration counts for the classical PCG method
and the breakdown-free block PCG  method \cite{JiL17,Ole80}.
For PCG, $iters$ is the largest PCG
iteration count over the $k+p$ right hand sides. For the block method, $iters=it_{S_I}$ is
the number of block PCG iterations. As expected from the theory, the block method significantly 
reduces the (maximum) iteration count.
For our examples, it also leads to a modest reduction in the iteration count $it_{\text{PCG}}$
for solving \cref{eq:Sx=b}.
\begin{table}
  \label{tab:blockvsclassic}
  \centering
  \footnotesize{
  \begin{tabular}{lrrcrr}
    \toprule
              &  \multicolumn{2}{c}{Classic} && \multicolumn{2}{c}{Block}          \\
    \cmidrule(lr){2-3} \cmidrule(lr){5-6}
    Identifier   & $iters$ &  $it_{\text{PCG}}$ && $iters$ &  $it_{\text{PCG}}$  \\
    \midrule
    bcsstk38  & 238  & 186                   && 46         & 173 \\  
    el2d      & 549  & 261                   && 72         & 228 \\
    el3d      & 95   & 56                    && 24         & 52  \\ 
    msc10848  & 203  & 194                   && 47         & 166 \\
    nd3k      & 294  & 191                   && 32         & 178 \\
    s3rmt3m3  & 403  & 157                   && 37         & 98  \\
    \bottomrule
  \end{tabular}
  }
    \vspace{0.05in}
  \caption{A comparison of the performance of classic and block PCG. $iters$ denotes the iteration count
  for solving \cref{eq:S_IX=F} (details in the text) and $it_{\text{PCG}}$ is the iteration count for solving \cref{eq:Sx=b}.}
\end{table}

\subsubsection{Impact of tolerance $\varepsilon_{S_I}$}
We now study the impact of the convergence tolerance $\varepsilon_{S_I}$ 
used when solving \cref{eq:S_IX=F} on the quality of the Nystr\"om--Schur preconditioner.
In \Cref{tab:compare_tolerance}, we present results
for three test problems that illustrate the (slightly) different behaviors we observed.
The results demonstrate numerically that a large tolerance can be used
without affecting the quality of the preconditioner.
Indeed, using $\varepsilon_{S_I}= 0.3$ leads
to a preconditioner whose efficiency is  close to that of the ideal 
(but impractical) two-level preconditioner $\widetilde{S}_2$.
The use of a large $\varepsilon_{S_I}$ to limit
$it_{S_I}$ is crucial in ensuring low construction costs for the Nystr\"om--Schur preconditioners.

\begin{table}
  \label{tab:compare_tolerance}
  \centering
  \footnotesize{
  \begin{tabular}{lrrrcrcr}
    \toprule
                               &               &  \multicolumn{2}{c}{$\calM_2$} &&     ${\widetilde{S}_1}$    && ${\widetilde{S}_2}$    \\
    \cmidrule(lr){3-4}
    \multicolumn{1}{c}{Identifier}& \multicolumn{1}{c}{$\varepsilon_{S_I}$} &  $it_{S_I}$ & $it_{PCG}$  &&     &&     \\
    \midrule
    \multirow{5}{*}{el2d    }  & 0.8\phantom{0}&  1          & 500+             && \multirow{5}{*}{914}  && \multirow{5}{*}{231} \\
                               & 0.5\phantom{0}&  68         & 228              &&                       &&                       \\
                               & 0.3\phantom{0}&  70         & 228              &&                       &&                       \\
                               & 0.1\phantom{0}&  72         & 228              &&                       &&                       \\
                               & 0.01          &  78         & 228              &&                       &&                       \\
                               \midrule
    \multirow{5}{*}{el3d    }  & 0.8\phantom{0}&  1          & 173              && \multirow{5}{*}{174}  && \multirow{5}{*}{37}   \\
                               & 0.5\phantom{0}&  2          & 171              &&                       &&                       \\
                               & 0.3\phantom{0}&  22         & 52               &&                       &&                       \\
                               & 0.1\phantom{0}&  24         & 52               &&                       &&                       \\
                               & 0.01          &  27         & 52               &&                       &&                       \\
                               \midrule
    \multirow{5}{*}{nd3k    }  & 0.8\phantom{0}&  32         & 178              && \multirow{5}{*}{603 } && \multirow{5}{*}{143}  \\
                               & 0.5\phantom{0}&  32         & 178              &&                       &&                       \\
                               & 0.3\phantom{0}&  32         & 178              &&                       &&                       \\
                               & 0.1\phantom{0}&  32         & 178              &&                       &&                       \\
                               & 0.01          &  33         & 178              &&                       &&                       \\
    \bottomrule
    
  \end{tabular}  }
  \vspace{0.05in}
  \caption{The effects of the convergence tolerance $\varepsilon_{S_I}$ on the quality of the Nystr\"om--Schur preconditioner.}
\end{table}

\subsection{Type of preconditioner}
We next compare the performances of
the variants $\calM_i$ and $\calMADi{i-}$ ($i = 1,2,3$)  of the Nystr\"om--Schur preconditioner presented in~\cref{sec:preconditioner}.
In~\cref{tab:compare_variations}, we report the total iteration count $it_\text{total}$.
All the variants have similar behaviors and have a significantly smaller count than the one-level preconditioner $\widetilde{S}_1$.

\begin{table}
  \label{tab:compare_variations}
  \centering
  \footnotesize{
  \begin{tabular}{lrrrrrrrr}
    \toprule
    Identifier   & $\calM_1$ & $\calMADi{1-}$ & $\calM_2$ & $\calMADi{2-}$ & $\calM_3$ & $\calMADi{3-}$ & $\widetilde{S}_1$ & $\widetilde{S}_2$\\
    \midrule                                                                                                                                 
    bcsstk38 & 218       & 218            & 219       & 219            & 360       & 313            &  584          &   122        \\
    el2d     & 266       & 267            & 300       & 300            & 282       & 282            &  914          &   231        \\
    el3d     & 73        & 72             & 76        & 75             & 78        & 76             &  174          &   37         \\
    msc10848 & 206       & 205            & 213       & 211            & 216       & 222            &  612          &   116        \\
    nd3k     & 205       & 205            & 210       & 210            & 211       & 211            &  603          &   143        \\
    s3rmt3m3 & 127       & 127            & 135       & 134            & 161       & 153            &  441          &   70         \\
    \bottomrule
  \end{tabular}
  }
      \vspace{0.05in}
    \caption{Comparison of $it_\text{total}$ for the variants of the Nystr\"om--Schur preconditioner
    and $\widetilde{S}_1$ and $\widetilde{S}_2$. $\varepsilon_{S_I}=0.1$.}
\end{table}

\subsection{Varying the rank and the oversampling parameter}
We now look  at varying the rank $k$ within the Nystr\"om algorithm and
demonstrate numerically that the efficiency of the preconditioner is robust with respect to the oversampling parameter $p$.
For problem s3rmt3m3, \Cref{tab:rank_variation} compares the  iteration counts for $\calM_2$ with that of
the ideal two-level preconditioner $\widetilde{S}_2$ for $k$ ranging from 5 to 320.
For $\widetilde{S}_1$, the iteration count is 441.
This demonstrates the effectiveness of the Nystr\"om--Schur preconditioner in reducing the iteration count.
Increasing the size of the deflation subspace (the rank $k$) 
steadily reduces the iteration count required to solve the $S_I$ system  \cref{eq:S_IX=F}.
For the same test example, \Cref{tab:oversampling_variation} presents the iteration counts for a range of values of
the oversampling parameter $p$ (here $k=20$).
We observe that the counts are relatively insensitive to $p$ but, as $p$ increases, $it_{\text{PCG}}$
reduces towards the lower bound of 70 PCG iterations required by $\widetilde{S}_2$.
Similar behavior was noticed for our other test examples.
Although increasing $k$ and $p$ improves the efficiency of the Nystr\"om--Schur preconditioner, 
this comes with  extra costs during both the construction of the preconditioner and its application.
Nevertheless, the savings from the reduction in the iteration count and the efficiency in solving 
block linear systems of equations for moderate block sizes (for example, $k=40$) typically outweigh the 
increase in construction costs.
\begin{table}
  \label{tab:rank_variation}
  \centering
  \footnotesize{
  \begin{tabular}{llrrrrrrr}
    \toprule
                               & $k$                 & 5   & 10  & 20  & 40  & 80  & 160 & 320 \\
                               \midrule
    \multirow{2}{*}{$\calM_2$} & $it_{S_I}$          & 97  & 57  & 37  & 23  & 16  & 11  & 8   \\ 
                               & $it_{\text{PCG}}$   & 244 & 203 & 98  & 53  & 30  & 20  & 14  \\
                               \midrule
                 $\widetilde{S}_2$ & $it_{\text{PCG}}$   & 212 & 153 & 70  & 37  & 22  & 13  & 9   \\
    \bottomrule
  \end{tabular}
  }
      \vspace{0.05in}

  \caption{Problem s3rmt3m3: Impact of the rank $k$ on the iteration counts ($p=0$).}
\end{table}

\begin{table}
  \label{tab:oversampling_variation}
  \centering
  \footnotesize{
  \begin{tabular}{lrrrrr}
  \toprule
                                $p$                 & 0   & 5   & 10 & 20 & 40\\
                               \midrule
                                $it_{S_I}$          & 37  & 31  & 28 & 23 & 20\\
                                 $it_{\text{PCG}}$ & 98  & 86  & 79 & 77 & 74\\
    \bottomrule
  \end{tabular}
  }
        \vspace{0.05in}

  \caption{Problem s3rmt3m3: Impact of the oversampling parameter $p$ on the iteration counts ($k=20$).}
\end{table}

\subsection{Comparisons with incomplete Cholesky factorization preconditioners}
Finally, we compare the Nystr\"om--Schur preconditioner with two incomplete Cholesky factorization preconditioners 
applied to original system.
The first is the Matlab variant \verb|ichol| with the global diagonal shift set
to $0.1$ \cblue{and default values for other parameters} and the second is the Matlab interface to the incomplete Cholesky (IC) factorization preconditioner
\verb|HSL_MI28| \cite{ScoT14} from the HSL library \cite{hsl:2018}  using the default parameter settings.
IC preconditioners are widely used but  their construction is often serial, potentially limiting their 
suitability for very large problems (see \cite{hsth:2018} for an IC preconditioner that
can be parallelised).
\begin{table}
  \label{tab:ichol}
  \centering
    \footnotesize{
  \begin{tabular}{lrrrr}
    \toprule      
    Identifier & \multicolumn{2}{c}{$\mathcal{M}_2$} & \verb|HSL_MI28| & \verb|ichol| \\
    \cmidrule(lr){2-3}
               & $it_{S_I}$   & $it_{PCG}$\\
    \midrule      
    bcsstk38   &46 &173              & 593             & 2786         \\  
    ela2d      &72 &228              & 108             & 2319         \\  
    ela3d      &24 &52               & 36              & 170          \\  
    msc10848   &47 &166              & 145             & 784          \\  
    nd3k       &32 &178              & 102             & 1231         \\  
    s3rmt3m3   &37 &98               & 610             & 2281         \\  
    \bottomrule      
  \end{tabular}
  }
  \cprotect\caption{PCG iteration counts for the Nystr\"om--Schur preconditioner $\mathcal{M}_2$ (with $k=20$)
  and the IC preconditioners \verb|HSL_MI28| and \verb|ichol|.}
\end{table}
\cblue{In terms of iteration counts,} the Nystr\"om--Schur and the \verb|HSL_MI28| preconditioners are clearly superior to 
the simple \verb|ichol|
preconditioner,
with neither consistently offering the best performance.
\Cref{fig:ichol} presents the residual norm history for PCG. This is confirmed
by the results in the Appendix for our large test set. The residual norm
for $\mathcal{M}_2$ decreases monotonically while for the
IC preconditioners we observe oscillatory behaviour.
\begin{figure}
\begin{center}
  \label{fig:ichol}
  \centering
    \includegraphics[height=4.5cm]{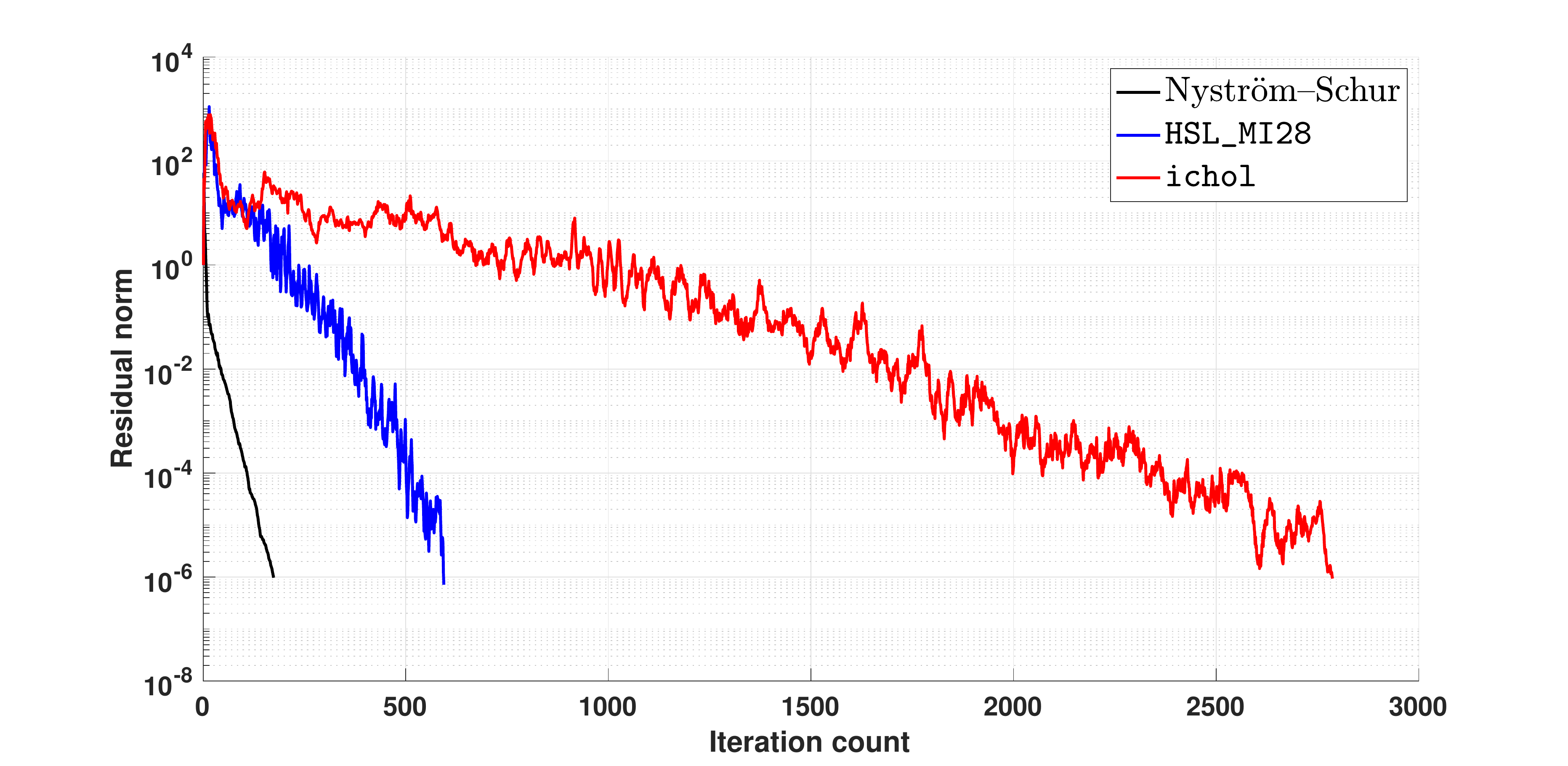}
    \includegraphics[height=4.5cm]{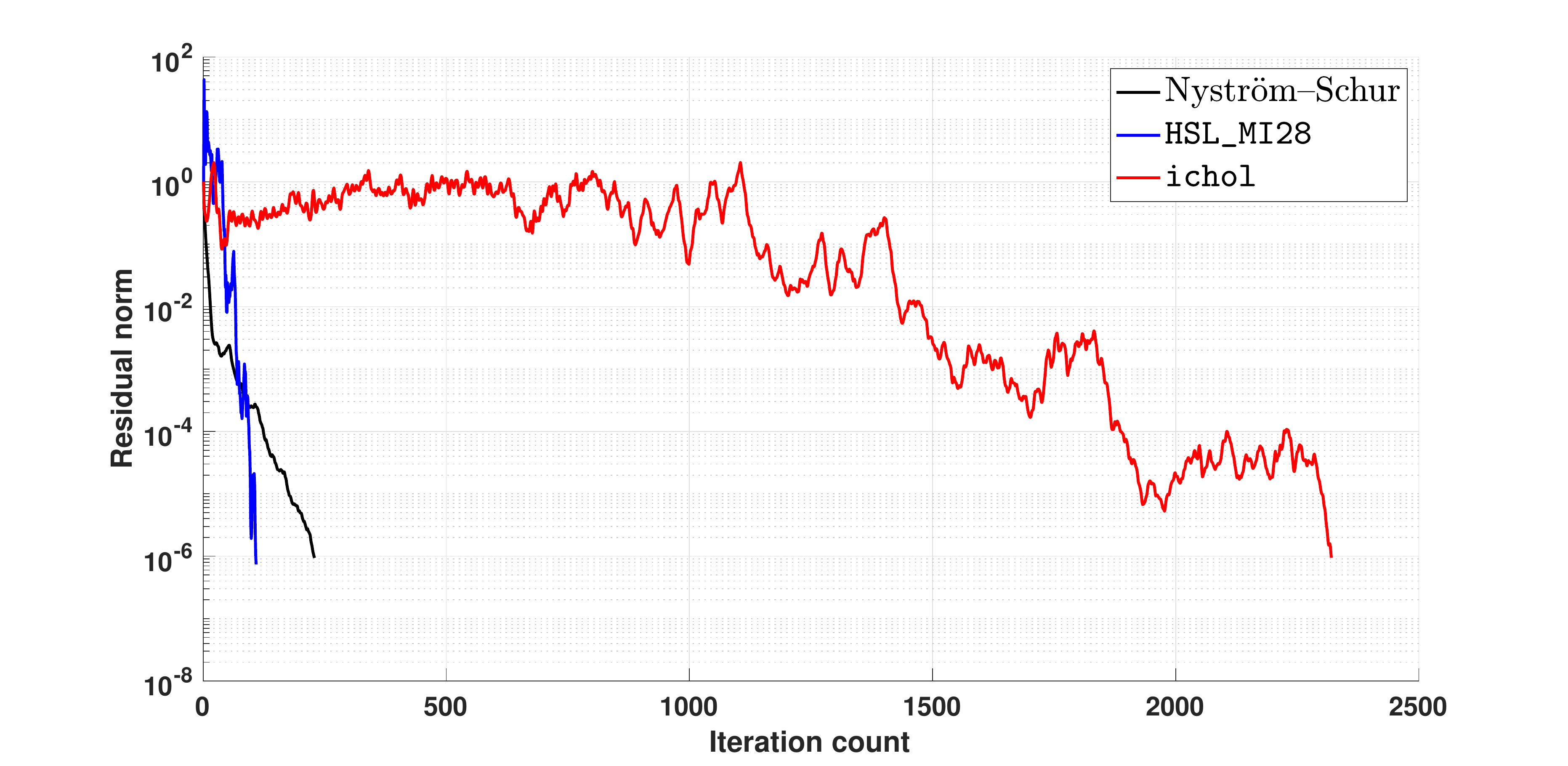}
  \caption{PCG residual norm history for test examples bcsstk38 (top) and ela2d (bottom).}
  \end{center}
\end{figure}

\cblue{Because our implementation of the Nystr\"om--Schur preconditioner is in Matlab, we are not able to
provide performance comparisons in terms of computation times. 
Having demonstrated the potential of our two-level Nystr\"om--Schur preconditioner, 
one of our objectives for the future is to
develop an efficient (parallel) implementation in Fortran that will be included within the HSL library.
This will allow users to test out the preconditioner and to assess the performance of 
both constructing and applying the preconditioner. Our preliminary work on this is
encouraging.}

\section{Concluding comments}
\label{sec:conclusion}
In this paper, we have investigated  using randomized methods to
construct efficient and robust preconditioners for use with CG to solve large-scale
SPD linear systems. The approach requires an initial ordering to
doubly bordered block diagonal form and then uses
a Schur complement approximation.
We have demonstrated that by carefully posing the approximation problem we
can apply randomized methods to construct high quality
preconditioners, which gives an improvement over previously proposed
methods that use low rank approximation strategies.
 We have presented a number of 
variants of our new Nystr\"om--Schur preconditioner. During the preconditioner construction,
we must solve a smaller linear system with multiple right-hand sides.
Our numerical experiments have shown that a small number of
iterations of block CG are needed to obtain an approximate solution that is sufficient 
to construct an effective preconditioner.

Currently, the construction and application of our Nystr\"om--Schur preconditioners requires the solution 
of linear systems with the block matrix $A_\Gamma$ \cref{eq:perm_A}.
Given the promising results presented in this paper, in the future
we plan to investigate employing a recursive approach, following ideas given in~\cite{XiLS16}.
This will only require the solution of systems involving a much smaller matrix and will lead
to a practical approach for very large-scale SPD systems.
\cblue{A parallel implementation of the preconditioner will also be developed.}

\appendix
\section{Extended numerical experiments}
\cblue{Here we present results for a larger test set. The problems are given in Table~\ref{tab:SSmatrices}.
We selected all the SPD matrices in the SuiteSparse Collection with $n$ lying between 5K and 100K,
giving us a set of 71 problems. For each problem, we ran PCG with the $\widetilde{S}_1$,
$\calM_2$, ${\widetilde{S}_2}$ and ${\tt HSL\_MI28}$ preconditioners. 
In all the tests, we use 64 subdomains. For $\calM_2$, we used $k=20$ and set $p=q=0$.
Iteration counts are given in  the table, whilst performance profiles \cite{dojo:2002} are presented in Figure~\ref{fig:performance_profile}.
In recent years, performance profiles have become a popular and widely used tool for providing objective information
when benchmarking algorithms. The performance profile takes into account the number of problems solved by an algorithm as well 
as the cost to solve it. It scales the cost of solving the problem according to the best solver for that problem.
In our case, the performance cost is the iteration count (for $\calM_2$,
we sum the counts $it_{S_I}$ and $it_{PCG}$).
Note that we do not include ${\widetilde{S}_2}$ in the performance profiles because it is an ideal
but impractical two-level  preconditioner and, as such, it always outperforms $\calM_2$. 
The performance profile shows that on the problems where $\widetilde{S}_1$ struggles,
there is little to choose between the overall quality of  $\calM_2$ and ${\tt HSL\_MI28}$.}
\begin{table}[htbp]
  \label{tab:SSmatrices}
  \centering
  {
  \footnotesize
  \begin{adjustbox}{width=0.45\linewidth}
  \begin{tabular}{lrrrrrr}
    \toprule
                                &  $\widetilde{S}_1$ &    \multicolumn{2}{c}{$\calM_2$} & ${\widetilde{S}_2}$ &  ${\tt HSL\_MI28}$  & $\kappa(A)$\\
    \cmidrule(lr){3-4}
    \multicolumn{1}{c}{Identifier} &        & $it_{S_I}$ & $it_{PCG}$  &        &        &    \\
    \midrule
    aft01                       & 118    &  19   &  45     & 31     & 17      & 9e+18  \\
    apache1                     & 667    &  122  &  291    & 192    & 72      & 3e+06  \\
    bcsstk17                    & 349    &  46   &  55     & 48     & 59      & 1e+10  \\
    bcsstk18                    & 136    &  40   &  77     & 45     & 26      & 6e+11  \\
    bcsstk25                    & $\dag$ &  92   &  660    & 453    & 254     & 1e+13  \\
    bcsstk36                    & 451    &  64   &  214    & 169    & $\dag$  & 1e+12  \\
    bcsstk38                    & 584    &  46   &  171    & 122    & 593     & 6e+16  \\
    bodyy6                      & 182    &  53   &  163    & 129    & 5       & 9e+04  \\
    cant                        & $\dag$ &  57   &  228    & 396    & 933     & 5e+10  \\
    cfd1                        & 209    &  30   &  72     & 50     & 274     & 1e+06  \\
    consph                      & 185    &  47   &  177    & 136    & 50      & 3e+07  \\
    gridgena                    & 426    &  90   &  377    & 298    & 66      & 6e+05  \\
    gyro                        & $\dag$ &  55   &  346    & 518    & 319     & 4e+09  \\
    gyro\_k                     & $\dag$ &  55   &  346    & 518    & 319     & 3e+09  \\
    gyro\_m                     & 165    &  16   &  34     & 22     & 17      & 1e+07  \\
    m\_t1                       & 867    &  85   &  247    & 187    & $\ddag$ & 3e+11  \\
    minsurfo                    & 15     &  3    &  15     & 13     & 3       & 8e+01  \\
    msc10848                    & 612    &  47   &  168    & 116    & 145     & 3e+10  \\
    msc23052                    & 479    &  69   &  220    & 175    & $\ddag$ & 1e+12  \\
    nasasrb                     & 1279   &  135  &  496    & 421    & $\dag$  & 1e+09  \\
    nd3k                        & 1091   &  56   &  301    & 230    & 102     & 5e+07  \\
    nd6k                        & 1184   &  108  &  325    & 248    & 116     & 6e+07  \\
    oilpan                      & 647    &  67   &  122    & 72     & 507     & 4e+09  \\
    olafu                       & 1428   &  69   &  489    & 757    & 557     & 2e+12  \\
    pdb1HYS                     & 869    &  89   &  83     & 274    & 483     & 2e+12  \\
    vanbody                     & $\dag$ &  287  &  1106   & 769    & $\ddag$ & 4e+03  \\
    ct20stif                    & 1296   &  90   &  232    & 281    & $\dag$  & 2e+14  \\
    nd12k                       & 1039   &  155  &  337    & 265    & 111     & 2e+08  \\
    nd24k                       & 1093   &  165  &  386    & 268    & 120     & 2e+08  \\
    s1rmq4m1                    & 154    &  19   &  50     & 32     & 33      & 5e+06  \\
    s1rmt3m1                    & 192    &  24   &  59     & 39     & 18      & 3e+08  \\
    s2rmq4m1                    & 231    &  28   &  54     & 41     & 39      & 4e+08  \\
    s2rmt3m1                    & 260    &  31   &  64     & 45     & 33      & 3e+11  \\
    s3dkq4m2                    & $\dag$ &  148  &  339    & 236    & 610     & 6e+11  \\
    \bottomrule
  \end{tabular} 
  \end{adjustbox}
    \quad \quad 
  \begin{adjustbox}{width=0.45\linewidth}
  \begin{tabular}{lrrrrrr}
    \toprule
                                &  $\widetilde{S}_1$ &    \multicolumn{2}{c}{$\calM_2$} & ${\widetilde{S}_2}$  &  ${\tt HSL\_MI28}$  & $\kappa(A)$\\
    \cmidrule(lr){3-4}
    \multicolumn{1}{c}{Identifier} &         & $it_{S_I}$ & $it_{PCG}$ &        &                 & \\
    \midrule
    s3dkt3m2                    & $\dag$ &  164  &  338    & 270    &  1107  & 3e+10  \\
    s3rmq4m1                    & 356    &  31   &  80     & 58     &  472   & 4e+10  \\
    s3rmt3m1                    & 434    &  36   &  101    & 64     &  413   & 4e+10  \\
    s3rmt3m3                    & 441    &  37   &  101    & 70     &  610   & 3e+00  \\
    ship\_001                   & 1453   &  367  &  600    & 368    &  1177   & 6e+09  \\
    smt                         & 399    &  59   &  112    & 72     &  95     & 1e+09  \\
    thermal1                    & 169    &  30   &  62     & 47     &  30     & 4e+01  \\
    Pres\_Poisson               & 92     &  13   &  29     & 19     &  32     & 3e+06  \\
    crankseg\_1                 & 92     &  16   &  49     & 33     &  34     & 9e+18  \\
    crankseg\_2                 & 89     &  17   &  47     & 32     &  38     & 8e+06  \\
    Kuu                         & 81     &  16   &  44     & 31     &  10     & 3e+04  \\
    bodyy5                      & 72     &  19   &  67     & 57     &  4      & 9e+03  \\
    Dubcova2                    & 62     &  11   &  32     & 23     &  14     & 1e+04  \\
    cbuckle                     & 55     &  9    &  51     & 39     &  47     & 7e+07  \\
    fv3                         & 50     &  12   &  31     & 21     &  8      & 4e+03  \\
    Dubcova1                    & 39     &  8    &  24     & 15     &  7      & 2e+03  \\
    bodyy4                      & 34     &  8    &  29     & 24     &  4      & 1e+03  \\
    jnlbrng1                    & 22     &  4    &  21     & 19     &  4      & 1e+02  \\
    bundle1                     & 13     &  3    &  8      & 5      &  5      & 1e+04  \\
    t2dah\_e                    & 12     &  3    &  12     & 11     &  3      & 3e+07  \\
    obstclae                    & 12     &  3    &  12     & 12     &  3      & 4e+01  \\
    torsion1                    & 12     &  3    &  12     & 12     &  3      & 8e+03  \\
    wathen100                   & 12     &  3    &  12     & 11     &  3      & 2e+07  \\
    wathen120                   & 12     &  3    &  12     & 11     &  3      & 2e+07  \\
    fv1                         & 7      &  2    &  7      & 7      &  3      & 1e+01  \\
    fv2                         & 7      &  2    &  7      & 7      &  3      & 1e+01  \\
    shallow\_water2             & 7      &  40   &  7      & 7      &  3      & 3e+12  \\
    shallow\_water1             & 5      &  20   &  5      & 5      &  2      & 1e+01  \\
    Muu                         & 6      &  1    &  6      & 6      &  2      & 1e+02  \\
    qa8fm                       & 6      &  1    &  6      & 6      &  2      & 1e+02  \\
    crystm02                    & 6      &  1    &  6      & 5      &  2      & 4e+02  \\
    crystm03                    & 6      &  1    &  6      & 5      &  2      & 4e+02  \\
    finan512                    & 5      &  1    &  5      & 5      &  3      & 9e+01  \\
    ted\_B\_unscaled            & 3      &  1    &  3      & 4      &  2      & 4e+05  \\
    ted\_B                      & 2      &  1    &  3      & 3      &  2      & 2e+11  \\
    Trefethen\_20000b           & 3      &  1    &  2      & 2      &  3      & 1e+05  \\
    Trefethen\_20000            & 4      &  1    &  2      & 2      &  3      & 2e+05  \\  
                                &        &       &         &        &         &        \\  
    \bottomrule
  \end{tabular}
  \end{adjustbox}
  }
  \vspace{0.1in}
  \caption{PCG iteration counts for SPD matrices from the SuiteSparse Collection with $n$ ranging between 5K and 100K.
  }
\end{table}

\begin{figure}
\begin{center}
  \label{fig:performance_profile}
  \centering
    \includegraphics[height=4.5cm]{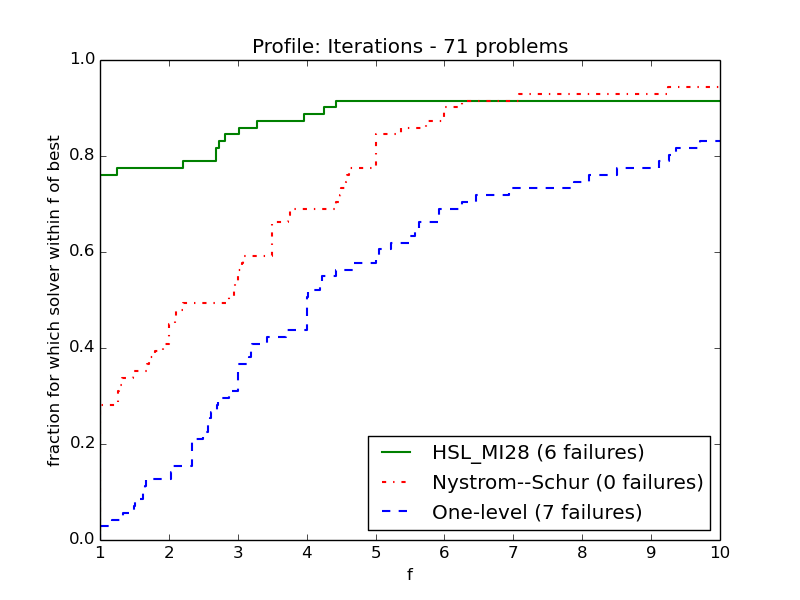}
    \includegraphics[height=4.5cm]{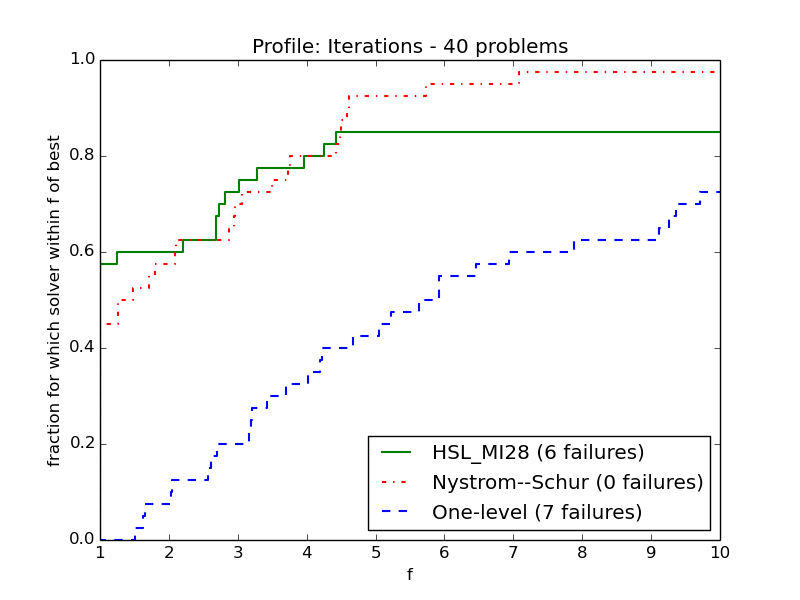}
  \caption{Iteration count performance profile for the large test set. The 40 problems used in the right hand plot are  the
  subset for which the $\widetilde{S}_1$ (one-level) iteration count exceeded 100.}
  \end{center}
\end{figure}

\bibliographystyle{siamplain}
\bibliography{main}
\end{document}